\newtheorem{theorem}{Theorem}
\newtheorem{corollary}{Corollary}
\newtheorem{lemma}{Lemma}
\newtheorem{proposition}{Proposition}
\newtheorem{remark}{Remark}
\newenvironment{proof}[1][Proof]{\noindent\textbf{#1.} }{\ \rule{0.5em}{0.5em}}
\begin{document}

\title{On zero behavior of higher-order Sobolev-type discrete q-Hermite I orthogonal polynomials}
\author{ {Edmundo J. Huertas$^{1}$\orcidlink{0000-0001-6802-3303}}, {Alberto Lastra$^{1}$\orcidlink{0000-0002-4012-6471}}, {Anier Soria-Lorente$^{2}$\orcidlink{0000-0003-3488-3094}}, {V\'{i}ctor Soto-Larrosa$^{1,\dagger }$\orcidlink{0000-0002-7079-3646}} \\
\\
$^{1}$Departamento de F\'isica y Matem\'aticas, Universidad de Alcal\'a\\
Ctra. Madrid-Barcelona, Km. 33,600\\
28805 - Alcal\'a de Henares, Madrid, Spain\\
edmundo.huertas@uah.es, alberto.lastra@uah.es, v.soto@uah.es\\
\\
$^{2}$Departamento de Tecnolog\'ia, Universidad de Granma\\
Ctra. de Bayamo-Manzanillo, Km. 17,500\\
85100 - Bayamo, Cuba\\
asorial@udg.co.cu, asorial1983@gmail.com}
\maketitle

\begin{abstract}
In this work, we investigate the sequence of monic q-Hermite I-Sobolev type orthogonal polynomials of higher-order, denoted as $\{\mathbb{H}%
_{n}(x;q)\}_{n\geq 0}$, which are orthogonal with respect to the following non-standard inner product involving q-differences:

\begin{equation*}
\langle p,q\rangle _{\lambda }=\int_{-1}^{1}f\left( x\right) g\left(
x\right) (qx,-qx;q)_{\infty }d_{q}(x)+\lambda \,(\mathscr{D}%
_{q}^{j}f)(\alpha)(\mathscr{D}_{q}^{j}g)(\alpha),
\end{equation*}

where $\alpha \in \mathbb{R}\backslash (-1,1)$, $\lambda $ belongs to the
set of positive real numbers, $\mathscr{D}_{q}^{j}$ denotes the $j$-th $q $%
-discrete analogue of the derivative operator, and $(qx,-qx;q)_{\infty
}d_{q}(x)$ denotes the orthogonality weight with its points of increase in a
geometric progression. We proceed to obtain the hypergeometric
representation of $\mathbb{H}_{n}(x;q)$ and explicit expressions for the
corresponding ladder operators. From the latter, we obtain a novel kind of
three-term recurrence formula with rational coefficients associated with
these polynomial family. Moreover, for certain real values of $\alpha $, we
present some results concerning the location of the zeros of $\mathbb{H}%
_n(x;q)$ and we perform a comprehensive analysis of their asymptotic
behavior as the parameter $\lambda $ varies from zero to infinity.

\vspace{0.3cm}

\textit{Key words:} orthogonal polynomials, Sobolev-type orthogonal
polynomials, q-Hermite polynomials, q-hypergeometric series.


{\footnotesize $\dagger$ Corresponding author.}
\end{abstract}



\section{Introduction}

\label{S1-Intro}



The \textit{discrete }$q$\textit{-Hermite I polynomials} of degree $n$,
usually denoted in the literature as $H_{n}(x;q)$, are a family of $q$%
-hypergeometric polynomials introduced in 1965 by W. A. Al--Salam and L.
Carlitz in \cite{AC1965}. They are the Al-Salam-Carlitz I polynomials with
parameter $a=-1$, and they are usually defined by means of their generating
function (see \cite[Sec. 14.28]{KLS2010})%
\begin{equation}
\sum\limits_{n=0}^{\infty }\frac{H_{n}(x;q)}{(q;q)_{n}}t^{n}=\frac{%
(t^{2};q^{2})_{\infty }}{(xt;q)_{\infty }}{,}  \label{e1}
\end{equation}%
where $q$ stands for their unique parameter, for which we assume that $0<q<1$%
, which means that they belong to the class of orthogonal polynomial
solutions of certain second order $q$-difference equations, known in the
literature as the Hahn class (see \cite{H1949}, \cite{KLS2010}). This
sequence of polynomials lies at the bottom of the Askey-scheme of
hypergeometric orthogonal polynomials, and for $q=1$ one recovers the
classical Hermite polynomials while for $q=0$ one recovers the re-scaled
Chebyshev polynomials of the second kind. It is well known that the
orthogonal polynomial family $\{H_{n}(x;q)\}_{n=0}^{\infty }$ finds
widespread applications in various theoretical and mathematical physics
scenarios. These applications extend to areas such as quantum groups (see,
for example \cite{K-OPTP90}, \cite{K-LST94}), discrete mathematics,
algebraic combinatorics (\cite{B-OPTP90}), as well as in the study of q-Schrö%
dinger equations and q-harmonic oscillators (\cite{AAS-SS93}, \cite{AS-LMP93}%
, \cite{AS-TMP85}, \cite{AS-TMP87} \cite{M-JPAMG89}).

\smallskip

On the other hand, the so called \textit{Sobolev orthogonal polynomials}
refer to those families of polynomials orthogonal with respect to inner
products involving positive Borel measures supported on infinite subsets of
the real line, and also involving regular derivatives. When these
derivatives appear only on function evaluations on a finite discrete set,
the corresponding families are called \textit{Sobolev-type} or \textit{%
discrete Sobolev} orthogonal polynomial sequences. For a recent and
comprehensive survey on the subject, we refer to \cite{MX2015} and the
references therein. Some of the techniques used in the paper have been taken
from the classical references \cite{LM1995} and \cite{MR1990}, but here
applied to $q$-lattices.

\smallskip

In the present work, our focus centers on a Sobolev-type family of
orthogonal polynomials associated with the $q$-Hermite I polynomials.
Specifically, we delve into the study of a higher-order sequence of $q$%
-Hermite I-Sobolev type polynomials, denoted as ${\mathbb{H}_{n}(x;q)}%
_{n\geq 0}$, which exhibit orthogonality concerning the Sobolev-type inner
product%
\begin{equation*}
\left\langle f,g\right\rangle _{\lambda }=\langle f,g\rangle +\lambda ({%
\mathscr D}_{q}^{j}f)(\alpha )({\mathscr D}_{q}^{j}g)(\alpha ),
\end{equation*}%
where $\left\langle \cdot ,\cdot \right\rangle $ stands for the inner
product associated to the orthogonality condition (see equation (\ref%
{eq:innerprod})) satisfied by the discrete $q-$Hermite I polynomials, $%
\lambda $ stands for a fixed positive real number, $\alpha $ is a real
number outside the support of the orthogonality measure related to $\langle
\cdot ,\cdot \rangle $, $j\in \mathbb{N}$, and ${\mathscr D}_{q}^{j}$ stands
for the $j$-th iterate of the Euler-Jackson $q$-difference operator (see
equation (\ref{eulerjackson})). To the best of our knowledge, when $\alpha $
is an arbitrary real number such that $|\alpha |>1$, the study of these
higher-order $q$-discrete Sobolev polynomials has not been carried out in
the literature. Thus, the major reason for this study is to bring together
several algebraic and analytic properties of the polynomials $\{\mathbb{H}%
_{n}(x;q)\}_{n\geq 0}$ as well as conducting a comprehensive analysis of the
zeros of the sequence, providing numerical evidence of them.

\smallskip

The paper is structured as follows. In Section~\ref{sec2} we recall the
basic facts and definitions from $q$-calculus and some of the main elements
associated to the $q$-Hermite I polynomials. In Section~\ref{S3-ConnForm},
we define the $q$-Hermite I Sobolev-type polynomials of higher order and
state some properties relating them to the $q$-Hermite I polynomials via
connection formulas, and also providing their basic hypergeometric
representation. In Section~\ref{secpral} we state two structure relations
and the expressions for the ladder operators associated to $\mathbb{H}%
_n(x;q) $, from which we obtain a three term recurrence relation with
non-constant coefficients (Theorem~\ref{S4-Theor3TRR-RC}). In section \ref%
{sec:zeros}, we study the location of the zeros of $\mathbb{H}_n(x;q)$ with
respect to the interval $(-1,1)$ and with respect to the mass point $\alpha$%
. We also deduce interlacing properties between the zeros of the Sobolev and
the standard sequences, as well as we investigate the asymptotic behaviour
of the zeros of $\mathbb{H}_n(x;q)$ as a function of $\lambda$ when $\lambda$
tends from zero to infinity. Some numerical examples support our results in
this section. The paper concludes with a section of conclusions, further
remarks and examples. 


\section{Preliminaries}

\label{sec2}



This preliminary section is devoted to recall the main results from $q$%
-calculus we will use through this paper, as well as the definition and main
properties of the discrete $q$-Hermite I polynomials.



\subsection{Results from $q$-calculus}



We begin by providing some background and definitions from $q$-calculus. We
refer to \cite{KLS2010} for further details. Given $n\in \mathbb{N}$, the $q$%
-number $[n]_{q}$, is denoted by%
\begin{equation*}
\lbrack n]_{q}=%
\begin{cases}
\displaystyle0, & \mbox{ if }n=0, \\ 
&  \\ 
\displaystyle\frac{1-q^{n}}{1-q}=\sum_{0\leq k\leq n-1}q^{k}, & \mbox{ if }%
n\geq 1.%
\end{cases}%
\end{equation*}%
It makes sense to extend the previous definition to $n\in \mathbb{Z}$ by $%
[n]_{q}=(1-q^{n})/(1-q)$. From the previous definition, a $q$-analogue of
the factorial of $n$ can be stated by 
\begin{equation*}
\lbrack n]_{q}!=%
\begin{cases}
\displaystyle1, & \mbox{if }n=0, \\ 
&  \\ 
\displaystyle\lbrack n]_{q}[n-1]_{q}\cdots \lbrack 2]_{q}[1]_{q}, & 
\mbox{if
}n\geq 1.%
\end{cases}%
\end{equation*}%
In addition to this, we will make use of a $q$-analogue of the Pochhammer
symbol, or shifted factorial, which is given by 
\begin{equation*}
\left( a;q\right) _{n}=%
\begin{cases}
\displaystyle1, & \mbox{if }n=0, \\ 
&  \\ 
\displaystyle\prod_{0\leq j\leq n-1}\left( 1-aq^{j}\right) , & \mbox{if }%
n\geq 1, \\ 
&  \\ 
\displaystyle(a;q)_{\infty }=\prod_{j\geq 0}(1-aq^{j}), & \mbox{if }n=\infty %
\mbox{ and }|a|<1.%
\end{cases}%
\end{equation*}%
Let us fix the following compact notation%
\begin{equation*}
(a_{1},\ldots ,a_{r};q)_{k}=\prod_{j=1}^{r}(a_{j};q)_{k}.
\end{equation*}

Given two finite sequences of complex numbers $\left\{ a_{i}\right\}
_{i=1}^{r}$ and $\left\{ b_{i}\right\} _{i=1}^{s}$ under the assumption that 
$b_{j}\neq q^{-n}$ with $n\in \mathbb{N}$, for $j=1,2,\ldots ,s$, the basic
hypergeometric, or $q$-hypergeometric, $_{r}\phi _{s}$ series with variable $%
z$ is defined by 
\begin{equation*}
_{r}\phi _{s}\left( 
\begin{array}{c}
a_{1},a_{2},\ldots ,a_{r} \\ 
b_{1},b_{2},\ldots ,b_{s}%
\end{array}%
;q,z\right) =\sum_{k=0}^{\infty }\frac{\left( a_{1},\ldots ,a_{r};q\right)
_{k}}{\left( b_{1},\ldots ,b_{s};q\right) _{k}}\left( (-1)^{k}q^{\binom{k}{2}%
}\right) ^{1+s-r}\frac{z^{k}}{\left( q;q\right) _{k}}.
\end{equation*}

We follow \cite{arvesu2013first} in defining the $q$-falling factorial by%
\begin{equation*}
\left[ s\right] _{q}^{(0)}=1,\quad \quad \left[ s\right] _{q}^{(n)}=\frac{%
(q^{-s};q)_{n}}{(q-1)^{n}}q^{ns-\binom{n}{2}},\,\text{for}\,n\geq 1,\,\text{%
and}\,\binom{n}{2}=0,\,\text{if}\,n<2,
\end{equation*}%
noticing that $[s]_{q}^{(1)}$ coincides with the $q$-number $[s]_{q}$. From
the previous definition, one can state the $q$-analog of the binomial
coefficient (see \cite[(1.9.4)]{KLS2010}), which is given by%
\begin{equation*}
\begin{bmatrix}
n \\ 
k%
\end{bmatrix}%
_{q}=\frac{\left( q;q\right) _{n}}{\left( q;q\right) _{k}\left( q;q\right)
_{n-k}}=\frac{\left[ n\right] _{q}!}{\left[ k\right] _{q}!\left[ n-k\right]
_{q}!},\quad k=0,1,\ldots ,n,
\end{equation*}%
where $n$ denotes a nonnegative integer.

Following \cite{ernst2007q}, the Jackson-Hahn-Cigler $q$-subtraction
operator is defined by%
\begin{equation*}
\left( x\boxminus _{q}y\right) ^{n}=\sum_{k=0}^{n}%
\begin{bmatrix}
n \\ 
k%
\end{bmatrix}%
_{q}q^{\binom{k}{2}}(-y)^{k}x^{n-k}.
\end{equation*}

It will be useful for determining a more compact writing for the derivatives
of the reproducing kernel of the sequence of $q$-Hermite I polynomials, and
all the elements determined from it. At this point, we define the $q$%
-derivative, or the Euler--Jackson $q$-difference operator, by (see \cite%
{KLS2010})%
\begin{equation}
({\mathscr D}_{q}f)(z)=%
\begin{cases}
\displaystyle\frac{f(qz)-f(z)}{(q-1)z}, & \text{if}\ z\neq 0,\ q\neq 1, \\ 
&  \\ 
f^{\prime }(z), & \text{if}\ z=0,\ q=1,%
\end{cases}
\label{eulerjackson}
\end{equation}%
where ${\mathscr D}_{q}^{0}f=f$, ${\mathscr D}_{q}^{n}f={\mathscr D}_{q}({%
\mathscr D}_{q}^{n-1}f)$, with $n\geq 1$. We observe that 
\begin{equation*}
\lim\limits_{q\rightarrow 1}{\mathscr D}_{q}f(z)=f^{\prime }(z).
\end{equation*}%
The previous $q$-analog of the derivative operator will determine two
functional equations satisfied by the Sobolev type polynomials defined in
the present work. Moreover, the $q$-derivative satisfies the following
algebraic statements%
\begin{equation}
{\mathscr D}_{q}[f(\gamma z)]=\gamma ({\mathscr D}_{q}f)(\gamma z),\quad
\forall \,\,\gamma \in \mathbb{C},  \label{cadrule}
\end{equation}%
\begin{equation}
{\mathscr D}_{q}f(z)={\mathscr D}_{q^{-1}}f(qz),  \label{DqProp}
\end{equation}%
\begin{equation}
{\mathscr D}_{q}[f(z)g(z)]=f(qz){\mathscr D}_{q}g(z)+g(z){\mathscr D}%
_{q}f(z),  \label{prodqD}
\end{equation}%
\begin{equation}
{\mathscr D}_{q}({\mathscr D}_{q^{-1}}f)(z)=q^{-1}{\mathscr D}_{q^{-1}}({%
\mathscr D}_{q}f)(z).  \label{DqProOK}
\end{equation}%
also a q-analog of the Leibniz' rule can be defined%
\begin{equation}
{\mathscr D}_{q}^{n}[f(z)g(z)]=\sum_{k=0}^{n}%
\begin{bmatrix}
n \\ 
k%
\end{bmatrix}%
_{q}(\mathscr{D}_{q}^{k}f)(z)\cdot (\mathscr{D}_{q}^{n-k}g)(q^{k}z),\quad
n=0,1,2,\ldots .  \label{eq:Leibniz}
\end{equation}%
all this definitions allow us to introduce the $q-$Taylor series for a
function $f(x)$ centered at $x_{0}$%
\begin{equation*}
f(x)=\displaystyle\sum_{k=0}^{\infty }\frac{\mathscr{D}_{q}^{k}f(x)}{[k]_{q}!%
}(x\boxminus _{q}x_{0})^{k}
\end{equation*}%
finally we are making use of the Jackson $q-$integral (see, for example \cite%
[Sec. 11.4]{Ism2005})

\begin{equation*}
\displaystyle\int_{a}^{b}f(t)d_{q}t=\displaystyle\int_{a}^{0}f(t)d_{q}t+%
\displaystyle\int_{0}^{b}f(t)d_{q}t,
\end{equation*}%
for $a<0<b$ where%
\begin{equation*}
\displaystyle\int_{a}^{0}f(t)d_{q}t=-a(1-q)\displaystyle\sum_{n=0}^{\infty
}f(q^{n}a)q^{n}
\end{equation*}%
and%
\begin{equation*}
\displaystyle\int_{0}^{b}f(x)d_{q}t=b(1-q)\displaystyle\sum_{n=0}^{\infty
}f(q^{n}b)q^{n}.
\end{equation*}%
%
%
%
%
%
%


\subsection{Discrete $q$-Hermite I orthogonal polynomials}

\label{S11-Prelim}



After the above section recalling the main elements of $q$-calculus, we
continue by giving several aspects and properties of the discrete $q$%
-Hermite I polynomials $\{H_{n}(x;q)\}_{n\geq 0}$. Departing from the
generating function (\ref{e1}), one has that the monic discrete $q$-Hermite
I polynomials can be explicitly given by (see \cite[f. (14.24.1) with $a=-1$]%
{KLS2010})%
\begin{equation}
H_{n}(x;q)=q^{\binom{n}{2}}\,_{2}\phi _{1}\left( 
\begin{array}{c}
q^{-n},x^{-1} \\ 
0%
\end{array}%
;q,-qx\right) ,  \label{ASP}
\end{equation}%
satisfying the orthogonality relation (see \cite{KLS2010})%
\begin{equation}
\int_{-1}^{1}H_{m}(x;q)H_{n}(x;q)(qx,-qx;q)_{\infty
}d_{q}x=(1-q)(q;q)_{n}(q,-1,-q;q)_{\infty }q^{\binom{n}{2}}\delta _{m,n},
\label{eq:innerprod}
\end{equation}%
where $\delta _{m,n}$ stands for Kronecker delta.


The following statements can be derived from the previous definitions (see~%
\cite{KLS2010} for further details).

\begin{proposition}
\label{S1-Proposition11} Let $\{H_{n}(x;q)\}_{n\geq 0}$ be the sequence of $%
q $-Hermite I polynomials of degree $n$. Then, the following statements hold.

\begin{enumerate}
\item Recurrence relation. The recurrence relation holds for every integer $%
n\ge0$ 
\begin{equation}
xH_{n}(x;q) =H_{n+1}(x;q)+\gamma_n H_{n-1}(x;q),  \label{ReR}
\end{equation}
with initial conditions $H_{-1}(x;q)=0$ and $H_{0}(x;q)=1$. Here, $%
\gamma_n=q^{n-1}(1-q^{n})$.

\item Squared norm. For every $n\in \mathbb{N}$, 
\begin{equation*}
||H_{n}||^{2}=(1-q)(q;q)_n(q,-1,-q;q)_{\infty}q^{\binom{n}{2}}.
\end{equation*}

\item Forward shift operator. For every $n,k\in \mathbb{N}$,%
\begin{equation}
{\mathscr D}_{q}^{k}H_{n}(x;q)=[n]_{q}^{(k)}H_{n-k}(x;q),\text{ with }%
H_{n-k}(x;q)=0\text{ if }n<k,  \label{FSop}
\end{equation}%
where we recall that%
\begin{equation*}
\left[ n\right] _{q}^{(k)}=\frac{(q^{-n};q)_{k}}{(q-1)^{k}}q^{kn-\binom{k}{2}%
},\text{ with }\binom{k}{2}=0\text{ if }k<2,
\end{equation*}%
denotes the $q$-falling factorial.

\item Second-order $q$-difference equation.%
\begin{equation*}
\sigma (x){\mathscr D}_{q}{\mathscr D}_{q^{-1}}H_{n}(x;q)+\tau (x){\mathscr D%
}_{q}H_{n}(x;q)+\lambda _{n,q}H_{n}(x;q)=0,
\end{equation*}%
where $\sigma (x)=x^{2}-1$, $\tau (x)=(1-q)^{-1}x$ and $\lambda
_{n,q}=[n]_{q}([1-n]_{q}\sigma ^{\prime \prime }/2-\tau ^{\prime })$.
\end{enumerate}
\end{proposition}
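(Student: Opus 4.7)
The proposition is classical and each of the four items can be read off either from the generating function \eqref{e1} or from the basic hypergeometric representation \eqref{ASP}. The plan is to treat them in the order stated, extracting as much as possible from \eqref{e1} and then switching to \eqref{ASP} when $q$-derivatives must be handled.

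For item (1), I would apply the $q$-derivative identity $\mathscr{D}_{q,t}$ (the $q$-difference in the variable $t$) to both sides of \eqref{e1}. On the right-hand side, the ratio $(t^{2};q^{2})_{\infty}/(xt;q)_{\infty}$ satisfies a first-order functional equation in $t$ that, after clearing denominators of the form $1-qt^{2}$ and $1-xt$, yields a polynomial identity. Comparing coefficients of $t^{n}$ in the resulting relation produces a three-term recurrence of the form $xH_{n}=H_{n+1}+\gamma_{n}H_{n-1}$ with the claimed $\gamma_{n}=q^{n-1}(1-q^{n})$. The monic normalization is automatic from the factor $t^{n}/(q;q)_{n}$ in \eqref{e1}, which forces leading coefficient $1$ after multiplying by $(q;q)_{n}$.

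For item (2), I would compute $\|H_{n}\|^{2}=\langle H_{n},H_{n}\rangle$ by the standard recursive argument: using the recurrence in (1), one shows $\|H_{n}\|^{2}=\gamma_{n}\,\|H_{n-1}\|^{2}$. Iterating and invoking the base case $\|H_{0}\|^{2}=\int_{-1}^{1}(qx,-qx;q)_{\infty}d_{q}x$, which can be evaluated by splitting the Jackson integral and recognizing the resulting $q$-series as $(1-q)(q,-1,-q;q)_{\infty}$, gives the stated formula. For item (3) I would apply $\mathscr{D}_{q}$ directly to \eqref{ASP}, using the chain rule \eqref{cadrule} and the fact that $\mathscr{D}_{q}$ acts on a $\,_{2}\phi_{1}$ series by shifting parameters; iterating $k$ times and collecting the prefactor yields $[n]_{q}^{(k)}H_{n-k}(x;q)$, with $H_{n-k}\equiv 0$ when $n<k$ coming from the termination $q^{-n}$ in the hypergeometric series.

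For item (4), the strategy is the Hahn-class approach: the weight $w(x)=(qx,-qx;q)_{\infty}$ satisfies a Pearson-type equation $\mathscr{D}_{q}[\sigma(x)w(x)]=\tau(x)w(x)$ with $\sigma(x)=x^{2}-1$ and $\tau(x)=(1-q)^{-1}x$. Using this together with the self-adjointness of $\sigma\mathscr{D}_{q}\mathscr{D}_{q^{-1}}$ with respect to $w$ and integration by parts for the $q$-integral, one shows that $\sigma\mathscr{D}_{q}\mathscr{D}_{q^{-1}}H_{n}+\tau\mathscr{D}_{q}H_{n}$ is a polynomial of degree $n$ orthogonal to all lower-degree polynomials, hence a multiple of $H_{n}$. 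Comparing leading coefficients fixes the eigenvalue $\lambda_{n,q}=[n]_{q}([1-n]_{q}\sigma''/2-\tau')$. The main bookkeeping obstacle across the four items is the careful tracking of $q$-Pochhammer and $q$-factorial factors when differentiating hypergeometric series and when reconciling the forward-shift formula with the $q$-falling factorial convention used in the paper; once those are aligned, each statement reduces to a short symbolic verification that can be cross-checked against \cite{KLS2010}.
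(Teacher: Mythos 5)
Your outline is essentially sound, but it is worth noting that the paper does not prove this proposition at all: it simply records these four facts as classical properties of the discrete $q$-Hermite I polynomials and refers the reader to \cite{KLS2010}, where they appear (the $q$-difference equation in an equivalent form, as the remark after the proposition points out). So your proposal is a genuinely different route only in the sense that you supply a self-contained derivation where the paper offers a citation. Your sketches are correct in substance: the functional equation $(1-t^2)F(x,qt)=(1-xt)F(x,t)$ for the generating function in (\ref{e1}) does yield the recurrence with $\gamma_n=q^{n-1}(1-q^n)$; the norm recursion $\|H_n\|^2=\gamma_n\|H_{n-1}\|^2$ plus the value of $\|H_0\|^2$ gives item (2) (though note this item is literally the $m=n$ case of the orthogonality relation (\ref{eq:innerprod}) that the paper already takes as given, so no recomputation is needed); and for item (4) the Pearson-type relation ${\mathscr D}_q[\sigma(x)w(x)]=\tau(x)w(x)$ with $w(x)=(qx,-qx;q)_\infty=(q^2x^2;q^2)_\infty$ does hold, $\sigma(\pm1)=0$ kills the boundary terms in the $q$-integration by parts, and the leading-coefficient comparison reproduces $\lambda_{n,q}$. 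The one soft spot is item (3): saying that ${\mathscr D}_q$ ``acts on a ${}_2\phi_1$ by shifting parameters'' glosses over the fact that in (\ref{ASP}) the variable $x$ appears both in the upper parameter $x^{-1}$ and in the argument $-qx$, so the termwise object to differentiate is really $(x^{-1};q)_k x^k=\prod_{j=0}^{k-1}(x-q^j)$, not a parameter-shifted series; the cleaner route is ${\mathscr D}_{q,x}F(x,t)=\tfrac{t}{1-q}F(x,t)$ from the generating function, which gives ${\mathscr D}_qH_n=[n]_qH_{n-1}$ at once and then iterates to (\ref{FSop}). With that adjustment your argument is a complete, elementary verification of what the paper leaves to the literature.
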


It is worth mentioning that the previous $q$-difference equation appears in 
\cite[eq. (14.28.5)]{KLS2010} in an equivalent form.



\begin{proposition}[Christoffel-Darboux formula]
\label{S1-Proposition12} Let $\{H_{n}(x;q)\}_{n\geq 0}$ be the sequence of $q
$-Hermite I polynomials. If we denote the $n$-th reproducing kernel by%
\begin{equation*}
K_{n,q}(x,y)=\sum_{k=0}^{n}\frac{H_{k}(x;q)H_{k}(y;q)}{||H_{k}||^{2}}.
\end{equation*}%
Then, for all $n\in \mathbb{N}$, it holds that%
\begin{equation}
K_{n,q}(x,y)=\frac{H_{n+1}(x;q)H_{n}(y;q)-H_{n+1}(y;q)H_{n}(x;q)}{\left(
x-y\right) ||H_{n}||^{2}}  \label{CDarb}
\end{equation}%
with $x\neq y$.
\end{proposition}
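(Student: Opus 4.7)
The plan is to give the classical telescoping proof of Christoffel--Darboux, adapted to the $q$-Hermite I setting, using the three-term recurrence relation~(\ref{ReR}) together with the squared norm formula from Proposition~\ref{S1-Proposition11}. First I would introduce the auxiliary bilinear quantity
\begin{equation*}
A_{k}(x,y)=H_{k+1}(x;q)H_{k}(y;q)-H_{k+1}(y;q)H_{k}(x;q),
\end{equation*}
which is precisely the numerator appearing on the right-hand side of~(\ref{CDarb}) when $k=n$.

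Next, I would apply~(\ref{ReR}) with $H_{n}$ replaced by $H_{k}$ once in the variable $x$ and once in the variable $y$, multiply the first identity by $H_{k}(y;q)$ and the second by $H_{k}(x;q)$, and subtract. After grouping, this produces the discrete ``integration-by-parts'' identity
\begin{equation*}
(x-y)H_{k}(x;q)H_{k}(y;q)=A_{k}(x,y)-\gamma _{k}A_{k-1}(x,y).
\end{equation*}
Dividing through by $\|H_{k}\|^{2}$, I would check the key compatibility relation
\begin{equation*}
\frac{\gamma _{k}}{\|H_{k}\|^{2}}=\frac{1}{\|H_{k-1}\|^{2}},
\end{equation*}
which follows at once from $\|H_{k}\|^{2}/\|H_{k-1}\|^{2}=(1-q^{k})q^{k-1}=\gamma _{k}$ using the explicit formula for $\|H_{n}\|^{2}$ and the standard $(q;q)_{n}$-ratio. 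This recasts the identity as
\begin{equation*}
(x-y)\frac{H_{k}(x;q)H_{k}(y;q)}{\|H_{k}\|^{2}}=\frac{A_{k}(x,y)}{\|H_{k}\|^{2}}-\frac{A_{k-1}(x,y)}{\|H_{k-1}\|^{2}},
\end{equation*}
which is now in explicit telescoping form.

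Finally, I would sum the latter equation for $k=0,1,\ldots ,n$. The left-hand side yields $(x-y)K_{n,q}(x,y)$ by definition of the reproducing kernel, while the right-hand side collapses to $A_{n}(x,y)/\|H_{n}\|^{2}-A_{-1}(x,y)/\|H_{-1}\|^{2}$. The boundary term vanishes thanks to the initial condition $H_{-1}(x;q)=0$ listed in Proposition~\ref{S1-Proposition11}, so dividing by $x-y$ (legitimate since $x\neq y$) delivers formula~(\ref{CDarb}).

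There is essentially no genuine obstacle here; the entire argument is the familiar one for classical orthogonal polynomials, and the only step that has any content specific to the $q$-Hermite I family is the verification of $\gamma _{k}=\|H_{k}\|^{2}/\|H_{k-1}\|^{2}$. The rest is algebra on the three-term recurrence and a telescoping sum, with the initial condition $H_{-1}\equiv 0$ taking care of the boundary contribution.
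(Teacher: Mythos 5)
Your proof is correct, and it is exactly the classical Christoffel--Darboux argument; the paper states this proposition without proof (deferring to \cite{KLS2010}), and your telescoping derivation from the recurrence (\ref{ReR}) combined with the norm-ratio check $\|H_{k}\|^{2}/\|H_{k-1}\|^{2}=q^{k-1}(1-q^{k})=\gamma_{k}$ is precisely the derivation being implicitly invoked. The only cosmetic point is the $k=0$ boundary term: $\|H_{-1}\|^{2}$ is not defined, so it is cleaner to observe that this term is absent because $\gamma_{0}=q^{-1}(1-q^{0})=0$ (equivalently, $A_{-1}\equiv 0$ since $H_{-1}\equiv 0$), after which the telescoping sum closes exactly as you describe.
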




Let us fix the following notation for the partial $q$-derivatives of $%
K_{n,q}(x,y)$%
\begin{eqnarray}
K_{n,q}^{(i,j)}(x,y) &=&{\mathscr D}_{q,y}^{j}({\mathscr D}%
_{q,x}^{i}K_{n,q}(x,y))  \notag \\
&=&\sum_{k=0}^{n}\frac{{\mathscr D}_{q}^{i}H_{k}(x;q){\mathscr D}%
_{q}^{j}H_{k}(y;q)}{||H_{k}||^{2}}.  \label{eq:KernelDij}
\end{eqnarray}%
Christoffel-Darboux formula allow us to give a more precise writing for the
derivatives of the $n$-th reproducing kernel associated to the sequence of $q
$-Hermite I polynomials in the next result. Its proof follows an analogous
technique as that \cite[Lemma 1]{hermoso2020second}, so therefore, we omit
the proof here.



\begin{lemma}
\label{S1-LemmaKernel0j} Let $\{H_{n}(x;q)\}_{n\geq 0}$ be the sequence of $%
q $-Hermite I polynomials of degree $n$. Then, the following statement holds
for all $n\in \mathbb{N}$ 
\begin{equation}
K_{n-1,q}^{(0,j)}(x,y)={\mathcal{A}}(x,y)H_{n}(x;q)+{\mathcal{B}}%
(x,y)H_{n-1}(x;q),  \label{Kernel0j}
\end{equation}%
where $\mathcal{A}(x,y) = \mathcal{A}(x,y;n,j,q) $ and $\mathcal{B}(x,y) = 
\mathcal{B}(x,y;n,j,q) $ are rational functions given by 
\begin{equation*}
{\mathcal{A}}(x,y)=\frac{\left[ j\right] _{q}!}{||H_{n-1}||^{2}\left(
x\boxminus_q y\right) ^{j+1}}\sum_{k=0}^{j}\frac{{\mathscr D}%
_{q}^{k}H_{n-1}(y;q)}{\left[ k\right] _{q}!}(x\boxminus_q y)^{k},
\end{equation*}%
and 
\begin{equation*}
{\mathcal{B}}(x,y)=-\frac{\left[ j\right] _{q}!}{||H_{n-1}||^{2}\left(
x\boxminus_q y\right) ^{j+1}}\sum_{k=0}^{j}\frac{{\mathscr D}%
_{q}^{k}H_{n}(y;q)}{\left[ k\right] _{q}!}(x\boxminus_q y)^{k},
\end{equation*}%
with $H_{-1}(x;q)=0$, and $K_{-1,q}(x,y)=0$.
\end{lemma}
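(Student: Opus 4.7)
The plan is to reduce the lemma, via the Christoffel--Darboux identity (\ref{CDarb}), to a single auxiliary identity that is then proved by induction on $j$. Write (\ref{CDarb}) in the split form
\[
K_{n-1,q}(x,y) = \frac{H_{n-1}(y;q)}{\|H_{n-1}\|^{2}(x-y)}\,H_{n}(x;q) - \frac{H_{n}(y;q)}{\|H_{n-1}\|^{2}(x-y)}\,H_{n-1}(x;q),
\]
and observe that the two factors $H_{n}(x;q)$ and $H_{n-1}(x;q)$ are independent of $y$. Applying $\mathscr{D}_{q,y}^{j}$ therefore reduces the lemma to establishing, for any polynomial $f(y)$, the auxiliary identity
\[
\mathscr{D}_{q,y}^{j}\!\left[\frac{f(y)}{x-y}\right] = \frac{[j]_{q}!}{(x\boxminus_{q}y)^{j+1}}\sum_{k=0}^{j}\frac{\mathscr{D}_{q}^{k}f(y)}{[k]_{q}!}\,(x\boxminus_{q}y)^{k}.
\]
Taking $f = H_{n-1}(\cdot\,;q)$ produces $\mathcal{A}$ and $f = H_{n}(\cdot\,;q)$ produces $-\mathcal{B}$, so the lemma follows at once from this identity.

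The auxiliary identity is proved by induction on $j$. The base case $j=0$ is trivial, as both sides equal $f(y)/(x-y)$. For the inductive step, apply $\mathscr{D}_{q,y}$ to the $j$-th version and use the $q$-product rule (\ref{prodqD}) in the variant $\mathscr{D}_{q}[\phi\psi](y)=\phi(y)\mathscr{D}_{q}\psi(y)+\psi(qy)\mathscr{D}_{q}\phi(y)$, together with two elementary consequences of the product representation $(x\boxminus_{q}y)^{n}=\prod_{i=0}^{n-1}(x-q^{i}y)$, namely
\[
\mathscr{D}_{q,y}\!\left[\frac{1}{(x\boxminus_{q}y)^{j+1}}\right]=\frac{[j+1]_{q}}{(x\boxminus_{q}y)^{j+2}}\quad\text{and}\quad (x-y)(x\boxminus_{q}qy)^{j}=(x\boxminus_{q}y)^{j+1}.
\]
A short manipulation then reduces the entire matter to the polynomial identity
\[
[j+1]_{q}\,S_{j}(y) + (x-y)\,\mathscr{D}_{q,y}S_{j}(y) = [j+1]_{q}\,S_{j+1}(y), \quad S_{j}(y) := \sum_{k=0}^{j}\frac{\mathscr{D}_{q}^{k}f(y)}{[k]_{q}!}(x\boxminus_{q}y)^{k}.
\]

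The main technical step is the evaluation of $\mathscr{D}_{q,y}S_{j}(y)$. Applying the $q$-product rule termwise together with $\mathscr{D}_{q,y}(x\boxminus_{q}y)^{k}=-[k]_{q}(x\boxminus_{q}qy)^{k-1}$ splits each summand into two pieces whose contributions telescope, and the only surviving term is $\mathscr{D}_{q,y}S_{j}(y)=(x\boxminus_{q}qy)^{j}\mathscr{D}_{q}^{j+1}f(y)/[j]_{q}!$. Multiplying by $(x-y)$ and invoking the factorization $(x-y)(x\boxminus_{q}qy)^{j}=(x\boxminus_{q}y)^{j+1}$ converts this into exactly $[j+1]_{q}$ times the single new summand $S_{j+1}(y)-S_{j}(y)$, which closes the induction. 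The main obstacle, and the only bookkeeping that demands care, is tracking the alternation between evaluations at $y$ and $qy$ introduced at every application of the $q$-product rule, and verifying that the two factorization identities above always permit the conversion of every $(x\boxminus_{q}qy)$-factor back into a $(x\boxminus_{q}y)$-factor before matching coefficients.
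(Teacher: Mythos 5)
Your argument is correct: starting from the Christoffel--Darboux formula (\ref{CDarb}) with $n$ replaced by $n-1$, the whole lemma does reduce to the quotient identity $\mathscr{D}_{q,y}^{j}\bigl[f(y)/(x-y)\bigr]=\frac{[j]_q!}{(x\boxminus_q y)^{j+1}}\sum_{k=0}^{j}\frac{\mathscr{D}_q^k f(y)}{[k]_q!}(x\boxminus_q y)^k$, and each ingredient of your induction checks out ($\mathscr{D}_{q,y}(x\boxminus_q y)^{k}=-[k]_q(x\boxminus_q qy)^{k-1}$, $\mathscr{D}_{q,y}(x\boxminus_q y)^{-(j+1)}=[j+1]_q(x\boxminus_q y)^{-(j+2)}$, $(x-y)(x\boxminus_q qy)^{j}=(x\boxminus_q y)^{j+1}$, and the telescoping giving $\mathscr{D}_{q,y}S_j(y)=(x\boxminus_q qy)^{j}\mathscr{D}_q^{j+1}f(y)/[j]_q!$). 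The paper itself omits the proof, deferring to the analogous Lemma 1 of \cite{hermoso2020second}; the route there, like yours, consists of $q$-differentiating the Christoffel--Darboux kernel $j$ times in $y$, but the computational core is the $q$-Leibniz rule rather than an induction on $j$. Indeed, your auxiliary identity drops out in one line from (\ref{eq:Leibniz}) applied to $f(y)\cdot(x-y)^{-1}$ once one knows $\mathscr{D}_{q,y}^{m}\bigl[(x-y)^{-1}\bigr]=[m]_q!\,(x\boxminus_q y)^{-(m+1)}$ and $(\mathscr{D}_q^{m}(x-\cdot)^{-1})(q^k y)=[m]_q!\,(x\boxminus_q y)^{k}/(x\boxminus_q y)^{m+k+1}$, since $\begin{bmatrix} j\\ k\end{bmatrix}_q[j-k]_q!=[j]_q!/[k]_q!$; this shortcut would spare you the bookkeeping of $y$ versus $qy$ evaluations that you rightly flag as the delicate point of the inductive version. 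In short, your proof is valid and self-contained, and differs from the cited technique only in replacing the Leibniz-rule expansion by an equivalent telescoping induction.
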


\begin{remark}
Notice that%
\begin{equation*}
K_{n-1,q}(x,\alpha )=\frac{[j]_{q}!}{||H_{n-1}||^{2}(x\boxminus _{q}\alpha
)^{j+1}}\left( H_{n}(x;q)\cdot Q_{j,q}(x,\alpha ,H_{n-1})-H_{n-1}(x;q)\cdot
Q_{j,q}(x,\alpha ,H_{n})\right) 
\end{equation*}%
where $Q_{j,q}(x,\alpha ,H_{n-1})$ and $Q_{j,q}(x,\alpha ,H_{n})$ are the $q-
$Taylor polynomials of degree $j$ of the polynomials $H_{n-1}(y;q)$ and $%
H_{n}(y;q)$ at $y=\alpha $, respectively.
\end{remark}



\begin{lemma}
\label{S1-LemmaKerneli2} Let $\{H_{n}(x;q)\}_{n\geq 0}$ be the sequence of $q
$-Hermite I polynomials of degree $n$. Then, the following statements hold,
for all $n,j\in \mathbb{N}$,%
\begin{equation}
K_{n-1,q}^{(1,j)}(x,y)=\mathcal{C}(x,y)H_{n}(x;q)+\mathcal{D}%
(x,y)H_{n-1}(x;q),  \label{kernel1j}
\end{equation}%
with $\mathcal{C}(x,y)=\mathcal{C}(x,y;n,j,q)$ and $\mathcal{D}(x,y)=%
\mathcal{D}(x,y;n,j,q)$ given by 
\begin{equation*}
\mathcal{C}(x,y)={\mathscr D}_{q}{\mathcal{A}}(x,y)-[n-1]_{q}\gamma
_{n-1}^{-1}\mathcal{B}(x,y)(qx,y),
\end{equation*}%
and 
\begin{equation*}
\mathcal{D}(x,y)=[n]_{q}\mathcal{A}(x,y)(qx,y)+[n-1]_{q}\gamma _{n-1}^{-1}x%
\mathcal{B}(x,y)(qx,y)+{\mathscr D}_{q}\mathcal{B}(x,y),
\end{equation*}%
%
%
%
%
%
%
%
%
%
%
%
%
%
%
%
with $H_{-1}(x;q)=0$, and $K_{-1,q}(x,y)=0$.
\end{lemma}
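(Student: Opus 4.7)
The plan is to obtain (\ref{kernel1j}) by simply differentiating the expression (\ref{Kernel0j}) of the previous lemma with respect to $x$ (in the $q$-sense) and reorganizing. Concretely, since
\begin{equation*}
K_{n-1,q}^{(1,j)}(x,y)=\mathscr{D}_{q,x}\bigl(K_{n-1,q}^{(0,j)}(x,y)\bigr)=\mathscr{D}_{q,x}\bigl(\mathcal{A}(x,y)H_{n}(x;q)+\mathcal{B}(x,y)H_{n-1}(x;q)\bigr),
\end{equation*}
I would apply the $q$-product rule (\ref{prodqD}) to each of the two summands separately, which introduces the shifted evaluations $\mathcal{A}(qx,y)$ and $\mathcal{B}(qx,y)$ together with $\mathscr{D}_{q}\mathcal{A}$ and $\mathscr{D}_{q}\mathcal{B}$.

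The second step is to rewrite the resulting $q$-derivatives of the $q$-Hermite I polynomials via the forward shift operator (\ref{FSop}) with $k=1$, giving $\mathscr{D}_{q}H_{n}(x;q)=[n]_{q}H_{n-1}(x;q)$ and $\mathscr{D}_{q}H_{n-1}(x;q)=[n-1]_{q}H_{n-2}(x;q)$. This leaves an unwanted $H_{n-2}(x;q)$ in the expression, so the third step is to eliminate it by invoking the three-term recurrence (\ref{ReR}) with index shifted down by one, which yields
\begin{equation*}
H_{n-2}(x;q)=\gamma_{n-1}^{-1}\bigl(xH_{n-1}(x;q)-H_{n}(x;q)\bigr).
\end{equation*}
Substituting this identity re-expresses everything as an $\mathbb{R}(x,y)$-linear combination of $H_{n}(x;q)$ and $H_{n-1}(x;q)$ alone, which is exactly the structure announced in (\ref{kernel1j}).

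The final step is bookkeeping: collect the coefficient of $H_{n}(x;q)$, which gives $\mathscr{D}_{q}\mathcal{A}(x,y)-[n-1]_{q}\gamma_{n-1}^{-1}\mathcal{B}(qx,y)$, matching $\mathcal{C}(x,y)$; and collect the coefficient of $H_{n-1}(x;q)$, which gives $[n]_{q}\mathcal{A}(qx,y)+[n-1]_{q}\gamma_{n-1}^{-1}x\mathcal{B}(qx,y)+\mathscr{D}_{q}\mathcal{B}(x,y)$, matching $\mathcal{D}(x,y)$ (here I read the paper's notation $\mathcal{B}(x,y)(qx,y)$ as the evaluation $\mathcal{B}(qx,y)$, since this is what is forced by the $q$-product rule). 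There is no genuine obstacle here; the only delicate point is keeping the distinction between $\mathcal{A}(qx,y)$, $\mathcal{A}(x,y)$, and $\mathscr{D}_{q}\mathcal{A}(x,y)$ consistent throughout the computation, together with the analogous care for $\mathcal{B}$, so that the dependence on $j$ (which is entirely hidden inside $\mathcal{A}$ and $\mathcal{B}$ from Lemma~\ref{S1-LemmaKernel0j}) is preserved and no extra terms are introduced.
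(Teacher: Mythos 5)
Your proposal is correct and follows essentially the same route as the paper: apply $\mathscr{D}_{q,x}$ to (\ref{Kernel0j}), use the $q$-product rule (\ref{prodqD}), rewrite $\mathscr{D}_{q}H_{n}$ and $\mathscr{D}_{q}H_{n-1}$ via (\ref{FSop}), and eliminate $H_{n-2}$ with the recurrence (\ref{ReR}) to collect the coefficients $\mathcal{C}$ and $\mathcal{D}$. Your reading of the notation $\mathcal{A}(x,y)(qx,y)$, $\mathcal{B}(x,y)(qx,y)$ as the shifted evaluations $\mathcal{A}(qx,y)$, $\mathcal{B}(qx,y)$ is exactly what the paper's product-rule computation forces.
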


\begin{proof}
Applying the $q$-derivative operator ${\mathscr D}_{q}$ with respect to $x$
variable to (\ref{Kernel0j}), together with the property (\ref{prodqD})
yields%
\begin{equation*}
K_{n-1,q}^{(1,j)}(x,y)=\mathcal{A}(x,y)(qx){\mathscr D}%
_{q}H_{n}(x;q)+H_{n}(x;q){\mathscr D}_{q}\mathcal{A}(x,y)(x)
\end{equation*}%
\begin{equation*}
+\mathcal{B}(x,y)(qx){\mathscr D}_{q}H_{n-1}(x;q)+H_{n-1}(x;q){\mathscr D}%
_{q}{\mathcal{B}(x,y)(x)}.
\end{equation*}%
Using (\ref{prodqD}), (\ref{ReR}) and (\ref{FSop}) we deduce (\ref{kernel1j}%
). 
\end{proof}



\section{Connection formulas and hypergeometric representation}

\label{S3-ConnForm}



In this section we introduce the $q$-Hermite I-Sobolev type polynomials of
higher order $\{\mathbb{H}_{n}(x;q)\}_{n\geq 0}$, which are orthogonal with
respect to the Sobolev-type inner product%
\begin{equation}
\left\langle f,g\right\rangle _{\lambda
}=\int_{-1}^{1}f(x)g(x)(qx,-qx;q)_{\infty }d_{q}x+\lambda ({\mathscr D}%
_{q}^{j}f)(\alpha ;q)({\mathscr D}_{q}^{j}g)(\alpha ;q),  \label{piSob}
\end{equation}%
where $\alpha \in \mathbb{R}\setminus (-1,1)$, $\lambda >0$ and $j\geq 1$.
In addition, we express the $q$-Hermite I-Sobolev type polynomials of higher
order $\{\mathbb{H}_{n}(x;q)\}_{n\geq 0}$ in terms of the $q$-Hermite I
polynomials $\{H_{n}(x;q)\}_{n\geq 0}$, their associated kernel polynomials
and their corresponding $q-$derivatives. Moreover, we obtain the basic
hypergeometric representation of the proposed polynomials.



\begin{proposition}
Let $\{\mathbb{H}_{n}(x;q)\}_{n\geq 0}$ be the sequence of $q$-Hermite
I-Sobolev type polynomials of degree $n$. Then, the following statements
hold for $n\geq 1$ 
\begin{equation}
\mathbb{H}_{n}(x;q)=H_{n}(x;q)-\lambda \frac{\lbrack
n]_{q}^{(j)}H_{n-j}(\alpha ;q)}{1+\lambda K_{n-1,q}^{(j,j)}(\alpha ,\alpha )}%
K_{n-1,q}^{(0,j)}(x,\alpha ).  \label{ConxF1}
\end{equation}%
for $n\geq 0$, with $H_{n-k}(x;q)=0$ if $n<k$, and $K_{-1,q}(x,y)=0$.
\end{proposition}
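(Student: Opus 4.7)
The plan is to follow the standard Fourier-expansion technique for Sobolev-type orthogonal polynomials, adapted to the $q$-setting. Since $\{H_{n}(x;q)\}_{n\geq 0}$ is a monic orthogonal basis for polynomials with respect to $\langle \cdot,\cdot\rangle$, and $\mathbb{H}_{n}(x;q)$ is monic of degree $n$, I would first write
\begin{equation*}
\mathbb{H}_{n}(x;q)=H_{n}(x;q)+\sum_{k=0}^{n-1}c_{n,k}\,H_{k}(x;q),
\end{equation*}
and then identify the coefficients $c_{n,k}$ by exploiting Sobolev-orthogonality of $\mathbb{H}_{n}$ against every lower-degree polynomial.

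Concretely, for each $0\le k\le n-1$, testing against $H_{k}(x;q)$ in $\langle\cdot,\cdot\rangle_{\lambda}$ gives
\begin{equation*}
0=\langle \mathbb{H}_{n},H_{k}\rangle_{\lambda}=\langle \mathbb{H}_{n},H_{k}\rangle+\lambda\,({\mathscr D}_{q}^{j}\mathbb{H}_{n})(\alpha)\,({\mathscr D}_{q}^{j}H_{k})(\alpha),
\end{equation*}
so $\langle \mathbb{H}_{n},H_{k}\rangle=-\lambda\,({\mathscr D}_{q}^{j}\mathbb{H}_{n})(\alpha)\,({\mathscr D}_{q}^{j}H_{k})(\alpha)$, and thus $c_{n,k}=-\lambda\,({\mathscr D}_{q}^{j}\mathbb{H}_{n})(\alpha)\,({\mathscr D}_{q}^{j}H_{k})(\alpha)/\|H_{k}\|^{2}$. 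Plugging these coefficients back in and recognising the sum $\sum_{k=0}^{n-1}H_{k}(x;q)\,({\mathscr D}_{q}^{j}H_{k})(\alpha)/\|H_{k}\|^{2}$ as the partial derivative kernel $K_{n-1,q}^{(0,j)}(x,\alpha)$ defined in~\eqref{eq:KernelDij} yields
\begin{equation*}
\mathbb{H}_{n}(x;q)=H_{n}(x;q)-\lambda\,({\mathscr D}_{q}^{j}\mathbb{H}_{n})(\alpha)\,K_{n-1,q}^{(0,j)}(x,\alpha).
\end{equation*}

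The last step is to pin down the still-unknown scalar $({\mathscr D}_{q}^{j}\mathbb{H}_{n})(\alpha)$. Applying ${\mathscr D}_{q}^{j}$ to both sides, evaluating at $x=\alpha$, and using the forward-shift identity~\eqref{FSop}, namely ${\mathscr D}_{q}^{j}H_{n}(\alpha;q)=[n]_{q}^{(j)}H_{n-j}(\alpha;q)$, produces the linear equation
\begin{equation*}
({\mathscr D}_{q}^{j}\mathbb{H}_{n})(\alpha)=[n]_{q}^{(j)}H_{n-j}(\alpha;q)-\lambda\,({\mathscr D}_{q}^{j}\mathbb{H}_{n})(\alpha)\,K_{n-1,q}^{(j,j)}(\alpha,\alpha),
\end{equation*}
which can be solved for $({\mathscr D}_{q}^{j}\mathbb{H}_{n})(\alpha)$; substituting the result back gives exactly~\eqref{ConxF1}.

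There is no serious obstacle here, only one point that needs a line of justification: the denominator $1+\lambda K_{n-1,q}^{(j,j)}(\alpha,\alpha)$ must not vanish. This is immediate because, by definition~\eqref{eq:KernelDij}, $K_{n-1,q}^{(j,j)}(\alpha,\alpha)=\sum_{k=0}^{n-1}({\mathscr D}_{q}^{j}H_{k}(\alpha;q))^{2}/\|H_{k}\|^{2}\ge 0$, so with $\lambda>0$ the denominator is at least $1$. The rest is routine manipulation, relying on the forward-shift property, the definition of the partial-derivative reproducing kernel, and the fact that $\{H_{n}(x;q)\}$ forms an orthogonal basis.
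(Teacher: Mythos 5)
Your proposal is correct and is essentially the paper's argument: the authors simply cite Section~2 of \cite{MR1990} for the general connection formula (with $({\mathscr D}_{q}^{j}H_{n})(\alpha)$ in the numerator) and then apply the forward-shift relation \eqref{FSop}, whereas you reproduce that standard derivation explicitly (Fourier expansion in the $H_{k}$ basis, identification of the kernel $K_{n-1,q}^{(0,j)}(x,\alpha)$, and solving the linear equation for $({\mathscr D}_{q}^{j}\mathbb{H}_{n})(\alpha)$). Your added remark that $1+\lambda K_{n-1,q}^{(j,j)}(\alpha,\alpha)\geq 1$ is a correct and welcome justification that the paper leaves implicit.
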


\begin{proof}
From section 2 of \cite{MR1990} we have 
\begin{equation}
\mathbb{H}_{n}(x;q)=H_{n}(x;q)-\lambda \frac{{\mathscr D}^{j}_{q}H_{n}(%
\alpha,q)}{ 1+\lambda K^{(j,j)}_{n-1,q}(\alpha,\alpha) }
K^{(0,j)}_{n-1,q}(x,\alpha),  \label{eq:DqconexionI}
\end{equation}
and using \ref{FSop} the result holds.
\end{proof}

As a consequence, we have the following results.



\begin{corollary}
\label{DqDq2HS} Let $\{\mathbb{H}_{n}(x;q)\}_{n\geq 0}$ be the sequence of $q$-Hermite I-Sobolev type polynomials of degree $n$. Then, the following statements hold for $n\geq 0$, 
\begin{equation*}
{\mathscr D}_{q}\mathbb{H}_{n}(x;q)=[n]_{q}H_{n-1}(x;q)-\lambda \frac{%
\lbrack n]_{q}^{(j)}H_{n-j}(\alpha ;q)}{1+\lambda K_{n-1,q}^{(j,j)}(\alpha
,\alpha )}K_{n-1,q}^{(1,j)}(x,\alpha ),
\end{equation*}%
with $H_{n-k}(x;q)=0$ if $n<k$, and $K_{-1,q}(x,y)=0$.

\end{corollary}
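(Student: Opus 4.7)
The plan is to differentiate the connection formula (\ref{ConxF1}) term by term using the Euler--Jackson operator $\mathscr{D}_q$ acting on the $x$ variable. The first step is to observe that the scalar prefactor
$$
\lambda \frac{[n]_{q}^{(j)} H_{n-j}(\alpha;q)}{1+\lambda K_{n-1,q}^{(j,j)}(\alpha,\alpha)}
$$
is independent of $x$, so by linearity of $\mathscr{D}_q$ it may be carried through the $q$-derivative without change.

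Next I would invoke two simple ingredients. Applying the forward shift formula (\ref{FSop}) with $k=1$ to the first summand yields $\mathscr{D}_q H_n(x;q) = [n]_q H_{n-1}(x;q)$. For the kernel term, the very definition (\ref{eq:KernelDij}) of the mixed partial $q$-derivatives of the reproducing kernel gives
$$
\mathscr{D}_{q,x}\, K_{n-1,q}^{(0,j)}(x,\alpha) = K_{n-1,q}^{(1,j)}(x,\alpha),
$$
which is exactly the object for which a closed form was already produced in Lemma \ref{S1-LemmaKerneli2}. Substituting both identities into the $q$-differentiated version of (\ref{ConxF1}) gives the claimed formula.

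I would then briefly verify that the degenerate-case conventions carry over: for $n=0$ both sides vanish (since $[0]_q=0$ and $K_{-1,q}^{(1,j)}\equiv 0$), and for $1\leq n<j$ the factor $H_{n-j}(\alpha;q)=0$ annihilates the Sobolev correction on both sides simultaneously, so no separate argument is needed. There is no real obstacle in this proof; the substantive work has already been absorbed into Lemma \ref{S1-LemmaKerneli2}, and the present step is essentially a bookkeeping application of linearity and the forward shift.
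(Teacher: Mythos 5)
Your proposal is correct and matches the paper's (implicit) argument: the paper states this corollary as an immediate consequence of the connection formula (\ref{ConxF1}), obtained exactly as you do by applying $\mathscr{D}_q$ in $x$, using linearity with the $x$-independent prefactor, the forward shift relation (\ref{FSop}) for $H_n$, and the definition (\ref{eq:KernelDij}) which gives $\mathscr{D}_{q,x}K_{n-1,q}^{(0,j)}(x,\alpha)=K_{n-1,q}^{(1,j)}(x,\alpha)$. Your handling of the degenerate cases $n=0$ and $n<j$ is a harmless extra check.
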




\begin{lemma}
Let $\{\mathbb{H}_{n}(x;q)\}_{n\geq 0}$ be the sequence of $q$-Hermite
I-Sobolev type polynomials of degree $n$. Then, the following statements
hold for $n\geq 0$, 
\begin{equation}
\mathbb{H}_{n}(x;q)={\mathcal{E}}_{1,n}(x)H_{n}(x;q)+{\mathcal{F}}%
_{1,n}(x)H_{n-1}(x;q),  \label{ConexF_I}
\end{equation}%
where 
\begin{equation}
{\mathcal{E}}_{1,n}(x)=1-\lambda \frac{\lbrack n]_{q}^{(j)}H_{n-j}(\alpha ;q)%
}{1+\lambda K_{n-1,q}^{(j,j)}(\alpha ,\alpha )}{\mathcal{A}}(x,\alpha ),
\label{EEq}
\end{equation}%
and 
\begin{equation}
{\mathcal{F}}_{1,n}(x)=-\lambda \frac{\lbrack n]_{q}^{(j)}H_{n-j}(\alpha ;q)%
}{1+\lambda K_{n-1,q}^{(j,j)}(\alpha ,\alpha )}{\mathcal{B}}(x,\alpha ),
\label{FEq}
\end{equation}%
with $\mathbb{H}_{1}(x;q)=0$, and $H_{-1}(x;q)=0$.
\end{lemma}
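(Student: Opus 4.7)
The plan is to substitute the kernel expansion from Lemma~\ref{S1-LemmaKernel0j} into the connection formula~(\ref{ConxF1}) and collect terms.

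First, I would recall the already-established connection formula~(\ref{ConxF1}),
\begin{equation*}
\mathbb{H}_{n}(x;q)=H_{n}(x;q)-\lambda \,\frac{[n]_{q}^{(j)}H_{n-j}(\alpha ;q)}{1+\lambda K_{n-1,q}^{(j,j)}(\alpha ,\alpha )}\,K_{n-1,q}^{(0,j)}(x,\alpha ),
\end{equation*}
which expresses $\mathbb{H}_{n}(x;q)$ in terms of $H_{n}(x;q)$ and the kernel derivative $K_{n-1,q}^{(0,j)}(x,\alpha )$.

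Next, I would apply Lemma~\ref{S1-LemmaKernel0j} with $y=\alpha$ to rewrite
\begin{equation*}
K_{n-1,q}^{(0,j)}(x,\alpha )=\mathcal{A}(x,\alpha )H_{n}(x;q)+\mathcal{B}(x,\alpha )H_{n-1}(x;q),
\end{equation*}
where $\mathcal{A}(x,\alpha)=\mathcal{A}(x,\alpha;n,j,q)$ and $\mathcal{B}(x,\alpha)=\mathcal{B}(x,\alpha;n,j,q)$ are the rational functions defined there. Substituting this into the previous display gives
\begin{equation*}
\mathbb{H}_{n}(x;q)=\Bigl(1-\lambda \,\frac{[n]_{q}^{(j)}H_{n-j}(\alpha ;q)}{1+\lambda K_{n-1,q}^{(j,j)}(\alpha ,\alpha )}\,\mathcal{A}(x,\alpha )\Bigr)H_{n}(x;q)-\lambda \,\frac{[n]_{q}^{(j)}H_{n-j}(\alpha ;q)}{1+\lambda K_{n-1,q}^{(j,j)}(\alpha ,\alpha )}\,\mathcal{B}(x,\alpha )\,H_{n-1}(x;q).
\end{equation*}
Identifying the bracketed coefficient of $H_n(x;q)$ with $\mathcal{E}_{1,n}(x)$ defined in~(\ref{EEq}), and the coefficient of $H_{n-1}(x;q)$ with $\mathcal{F}_{1,n}(x)$ defined in~(\ref{FEq}), yields the required identity~(\ref{ConexF_I}). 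The initial-condition cases $\mathbb{H}_1(x;q)$ and $H_{-1}(x;q)=0$ follow directly from the conventions $K_{-1,q}(x,y)=0$ and $H_{n-k}(x;q)=0$ for $n<k$ already used in~(\ref{ConxF1}).

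There is no real obstacle here: the lemma is essentially a bookkeeping reformulation of~(\ref{ConxF1}) using the closed form of $K_{n-1,q}^{(0,j)}(x,\alpha)$ granted by Lemma~\ref{S1-LemmaKernel0j}. The only point to check carefully is that the two formulas for $\mathcal{A}$ and $\mathcal{B}$ given in Lemma~\ref{S1-LemmaKernel0j} are specialised consistently at $y=\alpha$, so that the expressions~(\ref{EEq})--(\ref{FEq}) are well-defined as rational functions of $x$; this is immediate since $\alpha\in\mathbb{R}\setminus(-1,1)$ while the Jackson–Hahn–Cigler factor $(x\boxminus_{q}\alpha)^{j+1}$ in the denominators of $\mathcal{A}$ and $\mathcal{B}$ is a polynomial in $x$.
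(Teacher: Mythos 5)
Your proof is correct and follows exactly the paper's route: substituting the kernel expansion of Lemma~\ref{S1-LemmaKernel0j} at $y=\alpha$ into the connection formula~(\ref{ConxF1}) and identifying the coefficients of $H_n(x;q)$ and $H_{n-1}(x;q)$ with~(\ref{EEq}) and~(\ref{FEq}). The paper's own proof is just a one-line citation of these two results, so your write-up is simply a more detailed version of the same argument.
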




\begin{proof}
From (\ref{ConxF1}) and Proposition \ref{S1-LemmaKernel0j} we conclude the
result.
\end{proof}



On the other hand, from the previous Lemma and recurrence relation (\ref{ReR}%
) we have the following result for $n\geq 0$%
\begin{equation}
\mathbb{H}_{n-1}(x;q)={\mathcal{E}}_{2,n}(x)H_{n}(x;q)+{\mathcal{F}}%
_{2,n}(x)H_{n-1}(x;q),  \label{ConexF_II}
\end{equation}%
where 
\begin{equation*}
{\mathcal{E}}_{2,n}(x)=-\frac{{\mathcal{F}}_{1,n-1}(x)}{\gamma _{n-1}},
\end{equation*}%
and 
\begin{equation*}
{\mathcal{F}}_{2,n}(x)={\mathcal{E}}_{1,n-1}(x)-x{\mathcal{E}}_{2,n}(x),
\end{equation*}%
with $\mathbb{H}_{-1}(x;q)=0$, and $H_{-1}(x;q)=0$.



\begin{lemma}
\label{detxi} Let $\{\mathbb{H}_{n}(x;q)\}_{n\geq 0}$ be the sequence of $q$%
-Hermite I-Sobolev type polynomials of degree $n$. Then, the following
statements hold for $n\geq 0$%
\begin{equation}
\Theta_{1,2,n}(x)H_{n}(x;q)=%
\begin{vmatrix}
\mathbb{H}_{n}(x;q) & \mathbb{H}_{n-1}(x;q) \\ 
{\mathcal{F}}_{1,n}(x) & {\mathcal{F}}_{2,n}(x)%
\end{vmatrix}%
,  \label{ConexF_III}
\end{equation}%
and 
\begin{equation}
\Theta _{1,2,n}(x)H_{n-1}(x;q)=-%
\begin{vmatrix}
\mathbb{H}_{n}(x;q) & \mathbb{H}_{n-1}(x;q) \\ 
{\mathcal{E}}_{1,n}(x) & {\mathcal{E}}_{2,n}(x)%
\end{vmatrix}%
,  \label{ConexF_IV}
\end{equation}%
where we introduce the function $\Theta_{i,j,n}(x)$ for later convenience as follows 
\begin{equation*}
\Theta _{i,j,n}(x)=%
\begin{vmatrix}
{\mathcal{E}}_{i,n}(x) & {\mathcal{E}}_{j,n}(x) \\ 
{\mathcal{F}}_{i,n}(x) & {\mathcal{F}}_{j,n}(x)%
\end{vmatrix}%
,
\end{equation*}%
for $i,j = 1,2,3,4$ and with $\mathbb{H}_{-1}(x;q)=0$.
\end{lemma}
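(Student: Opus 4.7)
The plan is to view the two connection formulas \eqref{ConexF_I} and \eqref{ConexF_II} as a $2\times 2$ linear system in the unknowns $H_n(x;q)$ and $H_{n-1}(x;q)$, and then solve it by Cramer's rule. Concretely, the identities
\begin{equation*}
\mathbb{H}_n(x;q)={\mathcal E}_{1,n}(x)H_n(x;q)+{\mathcal F}_{1,n}(x)H_{n-1}(x;q),\qquad
\mathbb{H}_{n-1}(x;q)={\mathcal E}_{2,n}(x)H_n(x;q)+{\mathcal F}_{2,n}(x)H_{n-1}(x;q)
\end{equation*}
can be written in matrix form with coefficient matrix having columns $({\mathcal E}_{i,n}(x),{\mathcal F}_{i,n}(x))^{\!\top}$ for $i=1,2$, whose determinant is, by the very definition of $\Theta_{i,j,n}(x)$, precisely $\Theta_{1,2,n}(x)$.

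Next, I would isolate $H_n(x;q)$ by Cramer's rule, replacing the first column of the coefficient matrix by the right-hand side $(\mathbb{H}_n(x;q),\mathbb{H}_{n-1}(x;q))^{\!\top}$; expanding the resulting determinant yields
\begin{equation*}
\Theta_{1,2,n}(x)H_n(x;q)=\mathbb{H}_n(x;q)\,{\mathcal F}_{2,n}(x)-\mathbb{H}_{n-1}(x;q)\,{\mathcal F}_{1,n}(x),
\end{equation*}
which is exactly \eqref{ConexF_III}. Analogously, replacing the second column by the right-hand side gives
\begin{equation*}
\Theta_{1,2,n}(x)H_{n-1}(x;q)={\mathcal E}_{1,n}(x)\mathbb{H}_{n-1}(x;q)-{\mathcal E}_{2,n}(x)\mathbb{H}_{n}(x;q),
\end{equation*}
and rewriting the right-hand side as minus the required determinant yields \eqref{ConexF_IV}.

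There is no real obstacle here: the argument is essentially linear algebra applied to the two previously established connection formulas, and the definition of $\Theta_{i,j,n}(x)$ is engineered to match the Cramer denominator. The only point to be careful about is the bookkeeping of signs in \eqref{ConexF_IV} (hence the minus sign in front of the $2\times 2$ determinant), and an implicit understanding that the identities hold as polynomial (in fact rational) equalities so that $\Theta_{1,2,n}(x)\not\equiv 0$ need only be invoked if one wishes to divide; as stated the lemma multiplies through by $\Theta_{1,2,n}(x)$, so no nonvanishing hypothesis is required.
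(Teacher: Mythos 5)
Your proposal is correct and follows essentially the same route as the paper: the paper eliminates $H_{n-1}$ (respectively $H_{n}$) by multiplying \eqref{ConexF_I} by ${\mathcal F}_{2,n}(x)$ and \eqref{ConexF_II} by $-{\mathcal F}_{1,n}(x)$ and adding, which is exactly your Cramer's-rule solution of the $2\times 2$ system written out by hand. Your closing remark that no nonvanishing hypothesis on $\Theta_{1,2,n}(x)$ is needed, since the identities are stated multiplicatively, is also consistent with the paper's treatment.
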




\begin{proof}
Let us multiply (\ref{ConexF_I}) by ${\mathcal{F}}_{2,n}(x)$ and (\ref%
{ConexF_II}) by $-{\mathcal{F}}_{1,n}(x)$. Adding and simplifying the
resulting equations, we deduce (\ref{ConexF_III}). In addition, we can
proceed analogously to get (\ref{ConexF_IV}).
\end{proof}

Finally, we will focus our attention on the hypergeometric representation of 
$\mathbb{H}_{n}(x;q)$. We outline the proof and omit the details of it, that
follows a similar analysis to that carried out in \cite%
{costas2018analytic,hermoso2020second}.



\begin{proposition}[Hypergeometric character]
\label{S3-Theorem31} The q-Hermite I-Sobolev type polynomials of higher
order $\{\mathbb{H}_{n}(x;q)\}_{n\geq 0}$, have the following hypergeometric
representation for all $n\geq 0$,%
\begin{equation}
\mathbb{H}_{n}(x;q)=-\frac{{\mathcal{F}}_{1,n}(x)(1-\psi _{n}(x)q^{-1})q^{%
\binom{n}{2}-n+2}}{[n]_{q}\psi _{n}(x)(1-q)}\times \,_{3}\phi _{2}\left( 
\begin{array}{c}
q^{-n},x^{-1},\psi _{n}(x) \\ 
0,\psi _{n}(x)q^{-1}%
\end{array}%
;q,-qx\right)  \label{ASPTSHR}
\end{equation}%
where $\psi _{n}(x)=((1-q)\vartheta _{n}(x)+1)^{-1}$ and 
\begin{equation*}
\vartheta _{n}(x)=-\frac{q^{n-2}[n]_{q}{\mathcal{E}}_{1,n}(x)}{{\mathcal{F}}%
_{1,n}(x)}-[n-1]_{q}.
\end{equation*}
\end{proposition}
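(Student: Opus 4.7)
\bigskip
\noindent\textbf{Proof proposal.} The plan is to start from the connection formula \eqref{ConexF_I}, namely $\mathbb{H}_{n}(x;q)={\mathcal{E}}_{1,n}(x)H_{n}(x;q)+{\mathcal{F}}_{1,n}(x)H_{n-1}(x;q)$, and insert the basic hypergeometric representation \eqref{ASP} of the discrete $q$-Hermite I polynomials for both $H_n$ and $H_{n-1}$. The goal is to combine the two resulting ${}_2\phi_1$ series into a single ${}_3\phi_2$ series with denominator parameter $\psi q^{-1}$ and numerator parameter $\psi$, where the new free parameter $\psi=\psi_n(x)$ encodes the ratio ${\mathcal{E}}_{1,n}(x)/{\mathcal{F}}_{1,n}(x)$.

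First I would write, after using $(0;q)_k=1$,
\[
H_{n-1}(x;q)=q^{\binom{n-1}{2}}\sum_{k\geq 0}\frac{(q^{-(n-1)};q)_k(x^{-1};q)_k}{(q;q)_k}(-qx)^k,
\]
and use the elementary identity
\[
\frac{(q^{-(n-1)};q)_k}{(q^{-n};q)_k}=\frac{1-q^{-n+k}}{1-q^{-n}},
\]
together with $\binom{n-1}{2}=\binom{n}{2}-n+1$, to recast $\mathbb{H}_n(x;q)$ as
\[
\mathbb{H}_n(x;q)=q^{\binom{n}{2}}\sum_{k\geq 0}\frac{(q^{-n};q)_k(x^{-1};q)_k}{(q;q)_k}\Bigl[\mathcal{E}_{1,n}(x)+\frac{q^{-n+1}(1-q^{-n+k})}{1-q^{-n}}\,\mathcal{F}_{1,n}(x)\Bigr](-qx)^k.
\]
The bracket is affine in $q^{k}$, so it can be written as $c_n(x)\,(1-\psi_n(x)q^{k-1})$ for suitable $c_n(x)$ and $\psi_n(x)$ determined by matching the constant and $q^{k}$ coefficients.

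Next I would introduce the free parameter via the identity
\[
\frac{(\psi;q)_k}{(\psi q^{-1};q)_k}=\frac{1-\psi q^{k-1}}{1-\psi q^{-1}},
\]
which turns the factor $(1-\psi_n(x)q^{k-1})$ into the extra numerator/denominator pair needed to pass from ${}_2\phi_1$ to ${}_3\phi_2$. This yields
\[
\mathbb{H}_n(x;q)=q^{\binom{n}{2}}\,c_n(x)\,(1-\psi_n(x)q^{-1})\; {}_3\phi_2\!\left(\begin{array}{c}q^{-n},x^{-1},\psi_n(x)\\ 0,\psi_n(x)q^{-1}\end{array};q,-qx\right).
\]

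Finally, I would perform the matching. Solving the two linear conditions produces
\[
\psi_n(x)=\frac{q^{-n+2}\,\mathcal{F}_{1,n}(x)}{q\,\mathcal{F}_{1,n}(x)-(1-q^n)\,\mathcal{E}_{1,n}(x)},\qquad c_n(x)=\mathcal{E}_{1,n}(x)-\frac{q\,\mathcal{F}_{1,n}(x)}{1-q^n},
\]
and a direct rewriting using $(1-q)[n]_q=1-q^n$ and $(1-q)[n-1]_q=1-q^{n-1}$ shows that this $\psi_n(x)$ coincides with $((1-q)\vartheta_n(x)+1)^{-1}$ for the $\vartheta_n(x)$ stated in the proposition, and that $q^{\binom{n}{2}}c_n(x)=-\mathcal{F}_{1,n}(x)q^{\binom{n}{2}-n+2}/([n]_q\psi_n(x)(1-q))$, giving exactly \eqref{ASPTSHR}.

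The only delicate step is the bookkeeping: keeping track of the interplay between $q^{\binom{n}{2}}$ and $q^{\binom{n-1}{2}}$, between $(q^{-n};q)_k$ and $(q^{-(n-1)};q)_k$, and between the Pochhammer ratio with shift $q^{-1}$ and the affine-in-$q^k$ bracket. No step requires a new idea beyond those already used in \cite{costas2018analytic,hermoso2020second}; the main obstacle is simply verifying that the definition of $\psi_n(x)$ in terms of $\vartheta_n(x)$ matches the $\psi$ one naturally obtains from the coefficient matching, and that the prefactor simplifies to the form displayed in \eqref{ASPTSHR}.
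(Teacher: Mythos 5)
Your proposal is correct and follows essentially the same route as the paper's proof: combining the connection formula (\ref{ConexF_I}) with the $_2\phi_1$ form (\ref{ASP}) of $H_n$ and $H_{n-1}$, rewriting $(q^{1-n};q)_k$ in terms of $(q^{-n};q)_k$ to get a bracket affine in $q^k$, and absorbing that bracket into the Pochhammer pair $(\psi_n(x);q)_k/(\psi_n(x)q^{-1};q)_k$. The only difference is presentational: the paper packages the affine factor as $[k-1]_q+\vartheta_n(x)$ and quotes the identity $[k-1]_q+\vartheta_n(x)=\frac{1-\psi_n(x)q^{-1}}{\psi_n(x)(1-q)}\frac{(\psi_n(x);q)_k}{(\psi_n(x)q^{-1};q)_k}$, whereas you perform the equivalent coefficient matching directly, and your resulting $\psi_n(x)$ and prefactor indeed agree with (\ref{ASPTSHR}).
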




\begin{proof}
For $n=0$, a~trivial verification shows that (\ref{ASPTSHR}) yields $\mathbb{%
H}_{0}^{(a)}(x;q)=1$. For~$n \geq 1$, combining (\ref{ASP}) with (\ref%
{ConexF_I}) and the relations%
\begin{equation*}
(q^{1-n};q)_{k}=-\frac{q}{[n]_{q}}([k-1]_{q}-[n-1]_{q})(q^{-n};q)_{k}
\end{equation*}
where we define 
\begin{equation*}
[-1]_{q}:=\frac{1-q^{-1}}{1-q}=-q^{-1},
\end{equation*}
and 
\begin{equation*}
(q^{-n};q)_{k}=0,\quad n<k,
\end{equation*}%
yields%
\begin{equation*}
\mathbb{H}_{n}^{(a)}(x;q)=-q^{\binom{n}{2}-n+2}\frac{{\mathcal{F}}_{1,n}(x)}{%
[n]_{q}}\sum_{k=0}^{n}([k-1]_{q}+\vartheta
_{n}(x))(q^{-n};q)_{k}(x^{-1};q)_{k}\frac{(a^{-1}qx)^{k}}{(q;q)_{k}}.
\end{equation*}
For more details see \cite{hermoso2020second}.

On the other hand, using 
\begin{equation*}
\psi _{n}(x)=((1-q)\vartheta _{n}(x)+1)^{-1}
\end{equation*}
and 
\begin{equation*}
\vartheta _{n}(x)=-\frac{q^{n-2}[n]_{q}{\mathcal{E}}_{1,n}(x)}{{\mathcal{F}}%
_{1,n}(x)}-[n-1]_{q},
\end{equation*}
after~some straightforward calculations, we get%
\begin{equation*}
[k-1]_{q}+\vartheta _{n}(x)=\frac{1-\psi _{n}(x)q^{-1}}{\psi _{n}(x)(1-q)}%
\frac{(\psi _{n}(x);q)_{k}}{(\psi _{n}(x)q^{-1};q)_{k}}.
\end{equation*}%
Therefore%
\begin{equation*}
\mathbb{H}_{n}^{(a)}(x;q)=-\frac{{\mathcal{F}}_{1,n}(x)(1-\psi
_{n}(x)q^{-1})q^{\binom{n}{2}-n+2}}{[n]_{q}\psi _{n}(x)(1-q)}\sum_{k=0}^{n}%
\frac{(q^{-n};q)_{k}(x^{-1};q)_{k}(\psi _{n}(x);q)_{k}}{(\psi
_{n}(x)q^{-1};q)_{k}}\frac{(a^{-1}qx)^{k}}{(q;q)_{k}}
\end{equation*}
which coincides with (\ref{ASPTSHR}) and completes the proof.
\end{proof}



\begin{lemma}
\label{DqHS} Let $\{\mathbb{H}_{n}(x;q)\}_{n\geq 0}$ be the sequence of $q$%
-Hermite I-Sobolev type polynomials of degree $n$. Then, the following
statements hold for $n\geq 0$, 
\begin{equation}
{\mathscr D}_{q}\mathbb{H}_{n}(x;q)={\mathcal{E}}_{3,n}(x)H_{n}(x;q)+{%
\mathcal{F}}_{3,n}(x)H_{n-1}(x;q),  \label{DqHsE3F3}
\end{equation}%
and 
\begin{equation}
{\mathscr D}_{q}\mathbb{H}_{n-1}(x;q)={\mathcal{E}}_{4,n}(x)H_{n}(x;q)+{%
\mathcal{F}}_{4,n}(x)H_{n-1}(x;q),  \label{DqHsE4F4}
\end{equation}%
where 
\begin{equation*}
{\mathcal{E}}_{3,n}(x)=-\lambda \frac{\lbrack n]_{q}^{(j)}H_{n-j}(\alpha ;q)%
}{1+\lambda K_{n-1,q}^{(j,j)}(\alpha ,\alpha )}\mathcal{C}(x,\alpha ),
\end{equation*}

\begin{equation*}
{\mathcal{F}}_{3,n}(x)=[n]_{q}-\lambda \frac{\lbrack
n]_{q}^{(j)}H_{n-j}(\alpha ;q)}{1+\lambda K_{n-1,q}^{(j,j)}(\alpha ,\alpha )}%
\mathcal{D}(x,\alpha ),
\end{equation*}

\begin{equation*}
{\mathcal{E}}_{4,n}(x)=-\frac{\mathcal{F}_{3,n-1}(x)}{\gamma_{n-1}},
\end{equation*}%
and 
\begin{equation*}
{\mathcal{F}}_{4,n}(x)=\mathcal{E}_{3,n-1}(x)+\frac{x\mathcal{F}_{3,n-1}(x)}{%
\gamma_{n-1}}
\end{equation*}

with $\mathbb{H}_{n-k}(x;q)=0$ if $n<k$, and $K_{-1,q}(x,y)=0$.
\end{lemma}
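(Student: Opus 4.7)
The plan is to derive the two identities in the Lemma independently but in parallel, mirroring the strategy used to establish the companion connection formulas (\ref{ConexF_I}) and (\ref{ConexF_II}). Both identities express the $q$-derivative of a Sobolev polynomial as a linear combination of two consecutive classical $q$-Hermite I polynomials, and both follow from results already proved in the paper, so the whole argument should be essentially algebraic bookkeeping.

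For the first identity (\ref{DqHsE3F3}), I would start from Corollary \ref{DqDq2HS}, which gives
\[
\mathscr{D}_{q}\mathbb{H}_{n}(x;q)=[n]_{q}H_{n-1}(x;q)-\lambda\,\frac{[n]_{q}^{(j)}H_{n-j}(\alpha;q)}{1+\lambda K_{n-1,q}^{(j,j)}(\alpha,\alpha)}\,K_{n-1,q}^{(1,j)}(x,\alpha),
\]
and then substitute the expression for $K_{n-1,q}^{(1,j)}(x,\alpha)$ provided by Lemma \ref{S1-LemmaKerneli2} (specialised at $y=\alpha$), namely
\[
K_{n-1,q}^{(1,j)}(x,\alpha)=\mathcal{C}(x,\alpha)H_{n}(x;q)+\mathcal{D}(x,\alpha)H_{n-1}(x;q).
\]
Collecting the coefficients of $H_{n}(x;q)$ and $H_{n-1}(x;q)$ gives precisely the functions $\mathcal{E}_{3,n}(x)$ and $\mathcal{F}_{3,n}(x)$ stated in the Lemma.

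For the second identity (\ref{DqHsE4F4}), I would shift $n\mapsto n-1$ in the identity just proved, obtaining
\[
\mathscr{D}_{q}\mathbb{H}_{n-1}(x;q)=\mathcal{E}_{3,n-1}(x)H_{n-1}(x;q)+\mathcal{F}_{3,n-1}(x)H_{n-2}(x;q),
\]
and then eliminate $H_{n-2}(x;q)$ by means of the three-term recurrence (\ref{ReR}), which at level $n-1$ reads $H_{n-2}(x;q)=\gamma_{n-1}^{-1}\bigl(xH_{n-1}(x;q)-H_{n}(x;q)\bigr)$. Substituting and grouping the coefficients of $H_{n}(x;q)$ and $H_{n-1}(x;q)$ yields exactly the claimed expressions for $\mathcal{E}_{4,n}(x)$ and $\mathcal{F}_{4,n}(x)$. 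The boundary cases $\mathbb{H}_{-1}\equiv 0$ and $H_{-1}\equiv 0$ make the low-degree values consistent with the convention $K_{-1,q}(x,y)=0$.

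I do not anticipate a real obstacle: no new structural input is needed beyond Corollary \ref{DqDq2HS}, Lemma \ref{S1-LemmaKerneli2}, and the classical recurrence (\ref{ReR}). The only point requiring care is index handling, in particular keeping track of which level of the recurrence is invoked when passing from $\mathbb{H}_{n-1}$ to the combination involving $H_{n}$ and $H_{n-1}$, and verifying that the resulting coefficients match the stated formulas sign-for-sign.
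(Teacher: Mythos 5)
Your proposal is correct and follows essentially the same route as the paper: the first identity comes from combining Corollary \ref{DqDq2HS} with Lemma \ref{S1-LemmaKerneli2} at $y=\alpha$, and the second from shifting $n\mapsto n-1$ and eliminating $H_{n-2}$ via the recurrence (\ref{ReR}). Your index handling and the resulting coefficients $\mathcal{E}_{4,n}$, $\mathcal{F}_{4,n}$ match the stated formulas exactly.
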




\begin{proof}
From Corollary \ref{DqDq2HS} and Proposition \ref{S1-LemmaKerneli2}, we
deduce (\ref{DqHsE3F3}), then shifting $n\rightarrow n-1$ in (\ref{DqHsE3F3}%
) and using (\ref{ReR}) we obtain (\ref{DqHsE4F4}).
\end{proof}

\section{Ladder operators and recurrence relation}

\label{secpral}



This section states a three-term recursion formula with non-constant
coefficients satisfied by the elements of the sequence $\left\{\mathbb{H}%
_n(x;q)\right\}_{n\ge 0}$, which is directly obtained by a proper
combination of the ladder operators. We begin this section by introducing an
useful lemma involving two structure relations satisfied by the sequence $%
\left\{\mathbb{H}_n(x;q)\right\}_{n\ge 0}$.



\begin{lemma}
The sequence of q-Hermite I Sobolev-type polynomials $\left\{\mathbb{H}%
_n(x;q)\right\}_{n\ge 0}$ satisfies the following structure relations for
all $n\ge 0$.

\begin{equation}
\Theta_{1,2,n}(x)\mathscr{D}_q \mathbb{H}_n(x;q) = \Theta_{3,2,n}(x)\mathbb{H%
}_n(x;q)+\Theta_{1,3,n}(x)\mathbb{H}_{n-1}(x;q)  \label{eq:structure1}
\end{equation}
and 
\begin{equation}
\Theta_{1,2,n}(x)\mathscr{D}_q \mathbb{H}_{n-1}(x;q) = \Theta_{4,2,n}(x)%
\mathbb{H}_n(x;q)+\Theta_{1,4,n}(x)\mathbb{H}_{n-1}(x;q)
\label{eq:structure2}
\end{equation}
\end{lemma}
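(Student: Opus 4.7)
The plan is to view the four identities \eqref{ConexF_I}, \eqref{ConexF_II}, \eqref{DqHsE3F3}, \eqref{DqHsE4F4} as expressing the vectors $\mathbb{H}_n$, $\mathbb{H}_{n-1}$, $\mathscr{D}_q\mathbb{H}_n$, $\mathscr{D}_q\mathbb{H}_{n-1}$ in the two-dimensional basis $\{H_n(x;q),H_{n-1}(x;q)\}$, and then to invert the first pair to eliminate $H_n$ and $H_{n-1}$ from the last two. All the determinants that show up in this process are precisely the $\Theta_{i,j,n}(x)$ introduced in Lemma \ref{detxi}.

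First, I would invoke Lemma \ref{detxi} to write
\begin{equation*}
\Theta_{1,2,n}(x)\,H_n(x;q)=\mathcal{F}_{2,n}(x)\,\mathbb{H}_n(x;q)-\mathcal{F}_{1,n}(x)\,\mathbb{H}_{n-1}(x;q),
\end{equation*}
\begin{equation*}
\Theta_{1,2,n}(x)\,H_{n-1}(x;q)=-\mathcal{E}_{2,n}(x)\,\mathbb{H}_n(x;q)+\mathcal{E}_{1,n}(x)\,\mathbb{H}_{n-1}(x;q),
\end{equation*}
i.e.\ Cramer's rule applied to the linear system determined by \eqref{ConexF_I}--\eqref{ConexF_II}.

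Next, I would multiply \eqref{DqHsE3F3} through by $\Theta_{1,2,n}(x)$ and substitute the two displays above. After collecting the coefficients of $\mathbb{H}_n(x;q)$ and $\mathbb{H}_{n-1}(x;q)$, one obtains
\begin{equation*}
\Theta_{1,2,n}(x)\,\mathscr{D}_q\mathbb{H}_n(x;q)=\bigl[\mathcal{E}_{3,n}\mathcal{F}_{2,n}-\mathcal{E}_{2,n}\mathcal{F}_{3,n}\bigr]\mathbb{H}_n(x;q)+\bigl[\mathcal{E}_{1,n}\mathcal{F}_{3,n}-\mathcal{E}_{3,n}\mathcal{F}_{1,n}\bigr]\mathbb{H}_{n-1}(x;q),
\end{equation*}
and the two bracketed coefficients are exactly $\Theta_{3,2,n}(x)$ and $\Theta_{1,3,n}(x)$ by the definition given in Lemma \ref{detxi}. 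This proves \eqref{eq:structure1}. The relation \eqref{eq:structure2} is obtained in a completely analogous way by multiplying \eqref{DqHsE4F4} by $\Theta_{1,2,n}(x)$ and performing the same substitution, yielding $\Theta_{4,2,n}(x)$ and $\Theta_{1,4,n}(x)$ as the two coefficients.

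There is no real obstacle here: the argument is a purely formal $2\times 2$ linear-algebra manipulation, with the $\Theta_{i,j,n}$ playing the role of $2\times 2$ determinants. The only bookkeeping I would watch carefully is the sign convention in \eqref{ConexF_IV} and the order of the indices in $\Theta_{i,j,n}(x)$, so that the coefficient on the right-hand side really assembles into $\Theta_{3,2,n}$ rather than $\Theta_{2,3,n}$ (and similarly for the second identity).
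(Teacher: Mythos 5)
Your proposal is correct and follows essentially the same route as the paper: multiply \eqref{DqHsE3F3} and \eqref{DqHsE4F4} by $\Theta_{1,2,n}(x)$ and use Lemma \ref{detxi} to replace $\Theta_{1,2,n}(x)H_n(x;q)$ and $\Theta_{1,2,n}(x)H_{n-1}(x;q)$ by their expressions in $\mathbb{H}_n$ and $\mathbb{H}_{n-1}$, the collected $2\times 2$ determinants being exactly $\Theta_{3,2,n},\Theta_{1,3,n}$ and $\Theta_{4,2,n},\Theta_{1,4,n}$. Your write-up simply makes explicit the Cramer-type substitution that the paper's two-line proof leaves implicit, and the sign bookkeeping you carried out is consistent with \eqref{ConexF_III}--\eqref{ConexF_IV}.
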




\begin{proof}
Multiplying (\ref{DqHsE3F3}) by $\Theta_{1,2,n}(x)$ and using Lemma \ref%
{detxi} we obtain (\ref{eq:structure1}). In the same fashion, multiplying (%
\ref{DqHsE4F4}) by $\Theta_{1,2,n}(x)$ and using Lemma \ref{detxi} we deduce
(\ref{eq:structure2})
\end{proof}



rearranging terms in the above two equations, we arrive at a fully
equivalent result in terms of the ladder q-difference operators

\begin{theorem}
\label{ladderops} For every $n\ge 0$, $\lambda \in \mathbb{R}^+$, $\alpha
\in \mathbb{R}$ and $q\in(0,1)$, let $\hat{a}_n$ and $\hat{a}^\dagger_n$ be
the q-difference operators defined by

\begin{equation}
\hat{a}_n = \Theta_{1,2,n}(x)\mathscr{D}_q-\Theta_{3,2,n}(x)I_d
\label{eq:lowering}
\end{equation}

\begin{equation}
\hat{a}^\dagger_n = \Theta_{1,2,n}(x)\mathscr{D}_q-\Theta_{1,4,n}(x)I_d
\label{eq:rising}
\end{equation}
where $I_d$ is the identity operator. The previous q-difference operators
are lowering and rising operators related to the sequence of q-Hermite
I-Sobolev type polynomials $\left\{\mathbb{H}_n(x;q)\right\}_{n\ge 0}$,
satisfying 
\begin{equation}
\hat{a}_n\mathbb{H}_n(x;q) = \Theta_{1,3,n}(x)\mathbb{H}_{n-1}(x;q)
\label{eq:lowerHn}
\end{equation}
\begin{equation}
\hat{a}^\dagger_n\mathbb{H}_{n-1}(x;q) = \Theta_{4,2,n}(x)\mathbb{H}_n(x;q)
\label{eq:risingHn-1}
\end{equation}
\end{theorem}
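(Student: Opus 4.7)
The plan is to derive both identities directly from the structure relations \eqref{eq:structure1} and \eqref{eq:structure2} of the preceding lemma, since the ladder operators $\hat{a}_n$ and $\hat{a}_n^\dagger$ have been constructed precisely so that they absorb, respectively, the $\mathbb{H}_n(x;q)$ coefficient of \eqref{eq:structure1} and the $\mathbb{H}_{n-1}(x;q)$ coefficient of \eqref{eq:structure2}. No new identity is required; the proof is a two-line algebraic manipulation for each operator.

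First, to prove \eqref{eq:lowerHn}, I would apply $\hat{a}_n$ to $\mathbb{H}_n(x;q)$ by expanding its definition \eqref{eq:lowering}, obtaining $\Theta_{1,2,n}(x)\mathscr{D}_q \mathbb{H}_n(x;q)-\Theta_{3,2,n}(x)\mathbb{H}_n(x;q)$, and then substitute the right-hand side of \eqref{eq:structure1} for the first summand. The term $\Theta_{3,2,n}(x)\mathbb{H}_n(x;q)$ cancels, leaving exactly $\Theta_{1,3,n}(x)\mathbb{H}_{n-1}(x;q)$, as desired.

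Second, to establish \eqref{eq:risingHn-1}, I would proceed in an entirely analogous way: apply $\hat{a}_n^\dagger$ to $\mathbb{H}_{n-1}(x;q)$ by using definition \eqref{eq:rising}, then replace $\Theta_{1,2,n}(x)\mathscr{D}_q \mathbb{H}_{n-1}(x;q)$ with the right-hand side of \eqref{eq:structure2}. The $\Theta_{1,4,n}(x)\mathbb{H}_{n-1}(x;q)$ contributions cancel and we are left with $\Theta_{4,2,n}(x)\mathbb{H}_n(x;q)$, which is the claimed identity.

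There is no genuine obstacle here, since the theorem is really a reformulation of the previous lemma in operator language; the content of the statement is the observation that the factors $\Theta_{3,2,n}(x)$ and $\Theta_{1,4,n}(x)$ appearing in the structure relations can be transferred to the left-hand side to produce first-order $q$-difference operators whose action on $\mathbb{H}_n(x;q)$ and $\mathbb{H}_{n-1}(x;q)$ respectively lowers and raises the degree. I would simply note, at the end, that the operators $\hat{a}_n$ and $\hat{a}_n^\dagger$ are well defined on the space of polynomials in $x$ by virtue of the rationality of the coefficients $\Theta_{i,j,n}(x)$, and that the identification as lowering and raising operators follows from equations \eqref{eq:lowerHn} and \eqref{eq:risingHn-1}.
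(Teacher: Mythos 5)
Your proposal is correct and matches the paper's own treatment: the paper simply observes that the theorem follows by rearranging terms in the structure relations \eqref{eq:structure1} and \eqref{eq:structure2}, which is exactly the cancellation argument you carry out.
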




we next provide a three-term recurrence formula with non-constant
coefficients satisfied by the sequence $\left\{\mathbb{H}_n(x;q)\right\}_{n%
\ge 0}$



\begin{theorem}
\label{S4-Theor3TRR-RC}Let $\{\mathbb{H}_{n}(x;q)\}_{n\geq 0}$ be the
sequence of $q$-Hermite I-Sobolev type polynomials of degree $n$. Then, $%
\mathbb{H}_{n}(x;q)$ satisfies the following three-term recurrence relation
for all $n\geq 0$, 
\begin{equation}
\alpha_n(x)\mathbb{H}_{n+1}(x;q)=\beta_{n}(x)\mathbb{H}_{n}(x;q)+\gamma_n(x) 
\mathbb{H}_{n-1}(x;q),  \label{eq:3TRR}
\end{equation}%
where 
\begin{equation*}
\alpha_n(x) = \Theta_{1,2,n}(x)\Theta_{4,2,n+1}(x)
\end{equation*}%
\begin{equation}
\beta_n(x) =
\Theta_{1,2,n+1}(x)\Theta_{3,2,n}(x)-\Theta_{1,2,n}(x)\Theta_{1,4,n+1}(x)
\label{eq:3TRRcoeffs}
\end{equation}%
and 
\begin{equation*}
\gamma_n(x) = \Theta_{1,2,n+1}(x)\Theta_{1,3,n}(x)
\end{equation*}%
with $\mathbb{H}_{-1}(x;q)=0$ and $\mathbb{H}_0(x;q) = 1$.
\end{theorem}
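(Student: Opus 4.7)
The plan is to derive the three-term recurrence by composing the two ladder operators from Theorem \ref{ladderops} in a suitable way so that $\mathscr{D}_q\mathbb{H}_n$ cancels out and $\mathbb{H}_{n+1}$, $\mathbb{H}_n$, $\mathbb{H}_{n-1}$ are the only polynomials that remain.

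First, I would apply the rising operator with shifted index, that is, $\hat{a}^\dagger_{n+1}$, to $\mathbb{H}_n(x;q)$. By (\ref{eq:risingHn-1}) with $n$ replaced by $n+1$ this gives
\begin{equation*}
\Theta_{1,2,n+1}(x)\mathscr{D}_q\mathbb{H}_n(x;q)-\Theta_{1,4,n+1}(x)\mathbb{H}_n(x;q)=\Theta_{4,2,n+1}(x)\mathbb{H}_{n+1}(x;q).
\end{equation*}
This already exhibits $\mathbb{H}_{n+1}$, but it still contains the unwanted term $\mathscr{D}_q\mathbb{H}_n$.

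Second, I would invoke the lowering operator identity (\ref{eq:lowerHn}) at level $n$, which asserts that
\begin{equation*}
\Theta_{1,2,n}(x)\mathscr{D}_q\mathbb{H}_n(x;q)=\Theta_{3,2,n}(x)\mathbb{H}_n(x;q)+\Theta_{1,3,n}(x)\mathbb{H}_{n-1}(x;q).
\end{equation*}
To eliminate $\mathscr{D}_q\mathbb{H}_n$ I would multiply the first displayed equation by $\Theta_{1,2,n}(x)$ and substitute the product $\Theta_{1,2,n}(x)\Theta_{1,2,n+1}(x)\mathscr{D}_q\mathbb{H}_n(x;q)$ using the second. Collecting the resulting coefficients of $\mathbb{H}_{n+1}$, $\mathbb{H}_n$ and $\mathbb{H}_{n-1}$ then yields exactly (\ref{eq:3TRR}) with the coefficients $\alpha_n(x)$, $\beta_n(x)$, $\gamma_n(x)$ prescribed in (\ref{eq:3TRRcoeffs}).

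There is no serious obstacle: the argument is a purely algebraic elimination once the ladder operators are available, and the coefficients appear with the right signs by construction of the $\Theta_{i,j,n}$ determinants. The only small points to verify are the initial conditions $\mathbb{H}_{-1}(x;q)=0$ and $\mathbb{H}_0(x;q)=1$ (which follow from the definition of the Sobolev-type family and the convention $K_{-1,q}(x,y)=0$ used in Section \ref{S3-ConnForm}), and the fact that the identity holds as an equality of polynomials in $x$, not merely wherever $\Theta_{1,2,n}(x)$ is nonzero. For the latter, I would note that both sides of (\ref{eq:3TRR}) are polynomials in $x$ and the identity, once established on the Zariski-dense set where $\Theta_{1,2,n}(x)\neq 0$, extends to all $x$ by continuity.
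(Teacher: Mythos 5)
Your proof is correct and follows essentially the same route as the paper: shift the rising-operator identity (\ref{eq:risingHn-1}) to level $n+1$ and multiply by $\Theta_{1,2,n}(x)$, multiply the lowering-operator identity (\ref{eq:lowerHn}) by $\Theta_{1,2,n+1}(x)$, and eliminate $\mathscr{D}_q\mathbb{H}_n(x;q)$ between the two, which yields exactly the coefficients in (\ref{eq:3TRRcoeffs}). Your closing caution about extending the identity from the set where $\Theta_{1,2,n}(x)\neq 0$ is unnecessary, since the elimination is done by cross-multiplication rather than division, so the relation holds identically as polynomials from the outset.
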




\begin{proof}
Shifting $n\rightarrow n+1$ in (\ref{eq:risingHn-1}) and multiplying by $%
\Theta_{1,2,n}(x)$ we get

\begin{align}
\Theta_{1,2,n}(x) \left(\hat{a}^\dagger_{n+1} \mathbb{H}_n(x;q)\right) = & 
\hspace{0.1cm}\Theta_{1,2,n}(x)\left[\Theta_{1,2,n+1}(x)\mathscr{D}%
_q-\Theta_{1,4,n+1} (x)I_d\right]\mathbb{H}_n(x;q) =  \notag \\
= & \hspace{0.1cm} \Theta_{1,2,n}(x)\Theta_{4,2,n+1}(x)\mathbb{H}_{n+1}(x;q)
\label{eq:3trr1}
\end{align}
on the other hand, multiplying (\ref{eq:lowerHn}) by $\Theta_{1,2,n+1}(x)$
we obtain 
\begin{align}
\Theta_{1,2,n+1}(x)\left(\hat{a}_n\mathbb{H}_n(x;q)\right) = & \hspace{0.1cm}
\Theta_{1,2,n+1}(x)\left[\Theta_{1,2,n}(x)\mathscr{D}_q-\Theta_{3,2,n}(x)I_d %
\right]\mathbb{H}_n(x;q) =  \notag \\
= & \hspace{0.1cm}\Theta_{1,2,n+1}(x)\Theta_{1,3,n}(x)\mathbb{H}_{n-1}(x;q)
\label{eq:3trr2}
\end{align}
then, subtracting (\ref{eq:3trr2})-(\ref{eq:3trr1}) and after some
elementary manipulations we are able to get 
\begin{align}
\Theta_{1,2,n}(x)\Theta_{4,2,n+1}(x)\mathbb{H}_{n+1}(x;q) = \hspace{0.1cm}
[\Theta_{1,2,n+1}(x)\Theta_{3,2,n}&(x)-\Theta_{1,2,n}(x)\Theta_{1,4,n+1}(x)]%
\mathbb{H}_n(x;q)  \notag \\
& + \Theta_{1,2,n+1}(x)\Theta_{1,3,n}(x)\mathbb{H}_{n-1}(x;q)
\end{align}
which is (\ref{eq:3TRR}) for the set of coefficients $\alpha_n(x)$, $%
\beta_n(x)$ and $\gamma_n(x)$ given in (\ref{eq:3TRRcoeffs})
\end{proof}



\begin{remark}
It is not difficult to show that the inner product (\ref{piSob}) is
symmetric with respect to the product by the polynomial $(x\boxminus_q
\alpha)^{j+1}$. Following similar steps to those found in \cite[Proposition 4%
]{MR1990}, the previous statement leads us to a $2j+3$ recursion formula
satisfied by the sequence $\left\{\mathbb{H}_n(x;q)\right\}_{n\ge 0}$ of the
form

\begin{equation}
(x\boxminus_q \alpha)^{j+1}\mathbb{H}_n(x;q) = \displaystyle\sum_{\nu =
n-j-1}^{n+j+1}d_{n,\nu}\mathbb{H}_\nu(x;q)  \label{eq:recurrence_dnk}
\end{equation}
where the coefficients $d_{n,k}$ satisfy

\begin{equation*}
d_{n,k} = \frac{\langle (x\boxminus_q \alpha)^{j+1}\mathbb{H}_n(x;q), 
\mathbb{H}_k(x;q)\rangle}{\langle\mathbb{H}_k,\mathbb{H}_{k}\rangle} = \frac{%
\langle \mathbb{H}_n(x;q),(x\boxminus_q \alpha)^{j+1} \mathbb{H}%
_k(x;q)\rangle}{\langle\mathbb{H}_k,\mathbb{H}_{k}\rangle}
\end{equation*}
with $d_{n,n+j+1} = 1$ and 
\begin{equation*}
d_{n,n-j-1} = \frac{\langle \mathbb{H}_n(x;q),(x\boxminus_q \alpha)^{j+1} 
\mathbb{H}_{n-j-1}(x;q)\rangle}{\langle\mathbb{H}_{n-j-1},\mathbb{H}%
_{n-j-1}\rangle} = \frac{\langle\mathbb{H}_n(x;q),x^n\rangle}{\langle\mathbb{%
H}_{n-j-1},\mathbb{H}_{n-j-1}\rangle}=\frac{\langle\mathbb{H}_{n},\mathbb{H}%
_{n}\rangle}{\langle\mathbb{H}_{n-j-1},\mathbb{H}_{n-j-1}\rangle}>0
\end{equation*}
these coefficients contain the same amount of information that the ones
given in (\ref{eq:3TRRcoeffs}). Then, the three-term recurrence formula (\ref%
{eq:3TRR}) allows us to obtain $\mathbb{H}_{n+1}(x;q)$ knowing two
consecutive polynomials $\mathbb{H}_{n}(x;q)$ and $\mathbb{H}_{n-1}(x;q)$
through a recursion involving fewer terms than (\ref{eq:recurrence_dnk}).
Nevertheless, we have to deal with complicated non-constant coefficients.
\end{remark}

\section{Distribution and asymptotic behaviour of the zeros}

\label{sec:zeros} In this section we derive several results concerning the
zeros of the Sobolev-type $q-$Hermite I orthogonal polynomials $\mathbb{H}%
_n(x;q)$. We first obtain a sufficient condition that ensures these zeros
are real. Additionally, we study their location with respect to the interval 
$(-1,1)$ and with respect to the mass point $\alpha$. Next, we investigate
interlacing relations with the zeros of the $q-$Hermite I polynomials.
Moreover we analyze the behaviour of the zeros of $\mathbb{H}_n(x;q)$ as a
function of $\lambda$ when $\lambda$ tends from zero to infinity. Finally we
support the results obtained in this section with some interesting numerical
experiments.\newline

Concerning the classical $q$-Hermite I polynomials $H_n(x;q)$, we know that
their zeros are all real, simple and lie in (-1,1). For the considered
Sobolev polynomials, we have that $\mathbb{H}_n(x;q)$ can have complex zeros
as shown in Fig.\ref{fig:complexzeros}. Here it is shown the real and
complex zeros of $\mathbb{H}_4(x;q)$ for $\alpha = 0.25$, $q=0.6$, $j = 2$
and $\lambda=100$ whose numerical values are $-0.144763, 0.858505, -0.778637
\pm 0.221654 i$.



\begin{figure}[h]
\centering
\includegraphics[height=0.30\textwidth]{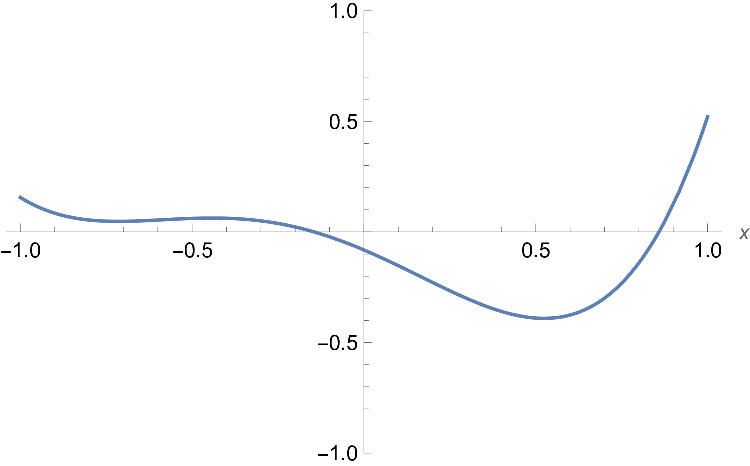} %
\includegraphics[width=0.45\textwidth]{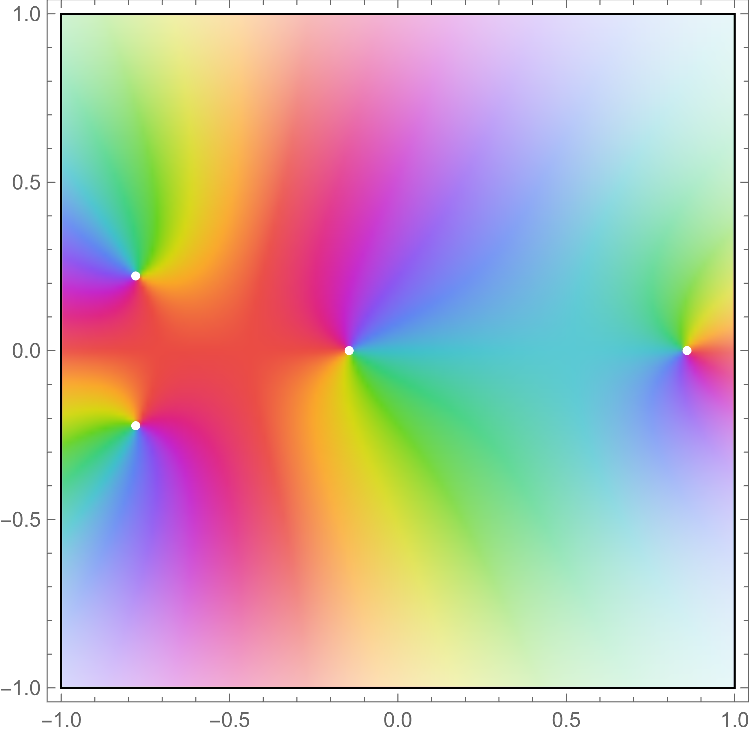}
\caption{Graph of $\mathbb{H}_4(x;q)$ for $\protect\alpha = 0.25$, $q=0.6$, $%
j = 2$ and $\protect\lambda = 100$ (left). Location of the real and complex
zeros of this polynomial (right).}
\label{fig:complexzeros}
\end{figure}



The fact of having complex zeros largely depends on the choice of $\alpha$.
Now we present a result ensuring that, for a specific configuration of the
parameters of the system, the $n$ zeros of $\mathbb{H}_n(x;q)$ are all real
and we locate them with respect to the interval $(-1,1)$.\newline



\begin{theorem}
Let $\lambda >0$, $j\geq 1$, $q\in(0,1)$ and $\alpha \notin (-1,1)$ such
that $q^j\alpha\notin (-1,1)$. Then for $n\geq j+1$ the polynomial $\mathbb{H%
}_n(x;q)$ has $n$ real simple zeros, at most one of them outside $(-1,1)$.
If $q^j\alpha <-1$ and $\mathbb{H}_n(x;q)$ has a zero outside $(-1,1)$, then
the zero can not be in $[1,\infty)$. If $q^j\alpha >1$ and $\mathbb{H}%
_n(x;q) $ has a zero outside $(-1,1)$, then the zero can not lie in $%
(-\infty,-1]$.
\end{theorem}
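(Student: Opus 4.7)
The plan is to combine the Sobolev orthogonality satisfied by $\mathbb{H}_n(x;q)$ with a careful sign analysis exploiting the structural fact that, under the hypothesis $q^j\alpha\notin(-1,1)$, the factor $(x\boxminus_q\alpha)^{j+1}=\prod_{k=0}^{j}(x-q^k\alpha)$ has no zeros in $(-1,1)$. Indeed, $q\in(0,1)$ forces each $q^k\alpha$ to lie on the same side of the interval as $\alpha$, so this factor is of constant sign on $(-1,1)$. Moreover, combining $\mathscr{D}_q^k(x\boxminus_q\alpha)^{j+1}=[j+1]_q^{(k)}(x\boxminus_q\alpha)^{j+1-k}$ with the $q$-Leibniz rule, multiplication by $(x\boxminus_q\alpha)^{j+1}$ annihilates the first $j+1$ iterates of $\mathscr{D}_q$ at $x=\alpha$; in particular, $(\mathscr{D}_q^j[(x\boxminus_q\alpha)^{j+1}g])(\alpha)=0$ for every polynomial $g$.

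First I would prove that $\mathbb{H}_n$ has at least $n-1$ sign changes in $(-1,1)$, arguing by contradiction. Suppose only $s\le n-2$ sign changes $x_1<\cdots<x_s$ exist there and set $\pi_s(x)=\prod_{i=1}^{s}(x-x_i)$, so that $\mathbb{H}_n\pi_s$ is of constant sign on $(-1,1)$. The goal is to produce a test polynomial $p\in\mathcal{P}_{n-1}$ of the form $p=\pi_s\, r$ with $\deg r\le n-1-s$, such that $(\mathscr{D}_q^j p)(\alpha)=0$ and $r$ is of constant sign on $(-1,1)$. The vanishing condition is a single linear constraint on the $(n-s)$-dimensional coefficient space of $r$, leaving an admissible family of dimension at least one. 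In the easy regime $s\le n-j-2$ one may take $r(x)=(x\boxminus_q\alpha)^{j+1}$, which automatically verifies the constraint by the remark above and is of constant sign on $(-1,1)$. In the intermediate regime $n-j-1\le s\le n-2$ the degree budget is too tight for this canonical choice, and one must parametrize the admissible line of polynomials explicitly, using that the coefficients of the $\mathscr{D}_q^j$-constraint involve $q^j\alpha$ and related quantities outside $(-1,1)$ to ensure the zeros of $r$ fall outside the interval. Once such $p$ is secured, the Sobolev orthogonality $\langle \mathbb{H}_n,p\rangle_\lambda=0$ together with $(\mathscr{D}_q^j p)(\alpha)=0$ forces $\int_{-1}^{1}\mathbb{H}_n\,p\,(qx,-qx;q)_\infty\,d_q x=0$, contradicting the constant sign of the integrand on the geometric support.

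Since $\mathbb{H}_n$ has real coefficients and at least $n-1$ real simple zeros in $(-1,1)$, the remaining zero is automatically real (complex zeros would come in conjugate pairs), giving $n$ real simple zeros with at most one outside $(-1,1)$. To locate this possible exceptional zero, I would evaluate $\mathbb{H}_n$ at $x=\pm 1$ via the connection formula (\ref{ConxF1}) and compare with the limit $\mathbb{H}_n(x;q)\to+\infty$ as $x\to+\infty$ (since the polynomial is monic). Under the case $q^j\alpha<-1$, the constant $C_n:=\lambda[n]_q^{(j)}H_{n-j}(\alpha;q)/(1+\lambda K^{(j,j)}_{n-1,q}(\alpha,\alpha))$ has the sign of $H_{n-j}(\alpha;q)$, which equals $(-1)^{n-j}$ since $\alpha<-1$ lies below all zeros of the monic $H_{n-j}$. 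Combined with the positivity $H_n(1;q)>0$ and the sign of $K_{n-1,q}^{(0,j)}(1,\alpha)$, one then verifies $\mathbb{H}_n(1;q)>0$, so that no zero occurs in $[1,\infty)$. The symmetric endpoint analysis at $x=-1$ handles the case $q^j\alpha>1$.

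The hard part I foresee is the construction of the admissible $r$ in the intermediate regime $n-j-1\le s\le n-2$: here no canonical choice like $(x\boxminus_q\alpha)^{j+1}$ fits within the degree budget, and one must verify by direct analysis that the one-dimensional linear family of candidates contains a polynomial free of zeros in $(-1,1)$. This is where the hypothesis $q^j\alpha\notin(-1,1)$ is used most delicately, through the coefficients of the $(\mathscr{D}_q^j p)(\alpha)=0$ constraint. A secondary obstacle is the endpoint sign analysis, requiring simultaneous control of the signs of $H_{n-j}(\alpha;q)$ and $K_{n-1,q}^{(0,j)}(\pm 1,\alpha)$ under both hypotheses $q^j\alpha>1$ and $q^j\alpha<-1$.
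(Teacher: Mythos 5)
Your overall skeleton (count sign changes of $\mathbb{H}_n$ in $(-1,1)$, test the Sobolev orthogonality against polynomials that keep a constant sign there, then do an endpoint/sign analysis for the possible exceptional zero) is the same as the paper's, but the two places you flag as "the hard part" are precisely where your plan is incomplete, and they are not minor. First, the intermediate regime $n-j-1\le s\le n-2$ is the only regime that actually matters: the conclusion is that $s\ge n-1$, so the contradiction must be produced for every $s\le n-2$, including $s=n-2$, where your admissible factor $r$ is forced to be linear and you never show its zero lies outside $(-1,1)$. (If you work out the constraint $(\mathscr{D}_q^j[\pi_s r])(\alpha)=0$ for $r(x)=x-c$ via the $q$-Leibniz rule, you get $c=q^j\alpha+[j]_q(\mathscr{D}_q^{j-1}\pi_s)(\alpha)/(\mathscr{D}_q^{j}\pi_s)(\alpha)$, so the needed fact is exactly the sign alternation of successive $q$-derivatives of $\pi_s$ at a point $\alpha$ lying outside the interval containing all its zeros.) The paper sidesteps the construction problem entirely: it tests against the pair $\rho$ and $(x-q^j\alpha)\rho$ (no vanishing condition imposed), notes that $\mathscr{D}_q^j\bigl[(x-q^j\alpha)\rho\bigr](\alpha)=[j]_q(\mathscr{D}_q^{j-1}\rho)(\alpha)$ because the shifted factor vanishes at $q^j\alpha$, and derives a contradiction by comparing the signs forced on $(\mathscr{D}_q^{j}\rho)(\alpha)$ and $(\mathscr{D}_q^{j-1}\rho)(\alpha)$, which must agree by orthogonality but alternate by the location of $\alpha$. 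Without either that pairing trick or an explicit construction in your tight regime, your first (and main) step does not go through.

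Second, your treatment of the location of the exceptional zero is asserted rather than proved. Showing $\mathbb{H}_n(1;q)>0$ when $q^j\alpha<-1$ from the connection formula (\ref{ConxF1}) requires controlling the sign of $K_{n-1,q}^{(0,j)}(1,\alpha)=\sum_{k}H_k(1;q)(\mathscr{D}_q^jH_k)(\alpha)/\|H_k\|^2$, whose terms have signs $(-1)^{k-j}$ for $\alpha<-1$ and therefore alternate; term-by-term positivity fails, and the claim is essentially equivalent to the statement you are trying to prove, so "one then verifies" is circular as written (one would need the closed Christoffel--Darboux-type expression of Lemma \ref{S1-LemmaKernel0j} and a genuine sign argument). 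The paper instead reuses relation (\ref{eq:innerP=0}) in the case $\deg\rho=n-1$ to pin down the relative sign of $(\mathscr{D}_q^j\mathbb{H}_n)(\alpha)$ and $\mathbb{H}_n(\alpha;q)$, which excludes a zero in $[1,\infty)$ (resp. $(-\infty,-1]$). So: correct strategy in outline, but the two gaps you yourself identify are the substantive content of the proof and remain open in your proposal.
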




\begin{proof}
Let $\eta_1, \eta_2,...,\eta_k$ denote the zeros of $\mathbb{H}_n(x;q)$ of
odd multiplicity in $(-1,1)$. Put $\rho(x)=(x-\eta_1)(x-\eta_2)\cdots
(x-\eta_k)$. Consider $q^j\alpha< -1$. Then $\rho(x)\mathbb{H}_{n}(x;q)$ and 
$(x-q^j\alpha)\rho(x)\mathbb{H}_{n}(x;q)$ do not change sign on $[-1,1]$.
Suppose $deg(\rho)\leq n-2$. Hence $\langle \rho,\mathbb{H}_n\rangle_\lambda
=0$ and $\langle (x-q^j\alpha)\rho,\mathbb{H}_n\rangle_\lambda = 0$, which
respectively implies

\begin{equation}
\displaystyle\int_{-1}^1\rho(x)\mathbb{H}_n(x;q) d_q\mu(x) +\lambda \left(%
\mathscr{D}^{j}_q\rho\right)(\alpha)\left(\mathscr{D}^{j}_q\mathbb{H}%
_n\right)(\alpha;q) = 0  \label{eq:innerP=0}
\end{equation}
and

\begin{equation}
\displaystyle\int_{-1}^1(x-q^j\alpha)\rho(x)\mathbb{H}_n(x;q) d_q\mu(x)
+\lambda [j]_q\left(\mathscr{D}^{j-1}_q\rho\right)(\alpha)\left(\mathscr{D}%
^{j}_q\mathbb{H}_n\right)(\alpha;q) = 0
\end{equation}
then $\left(\mathscr{D}^{j}_q\rho\right)(\alpha)$ and $\left(\mathscr{D}%
^{j-1}_q\rho\right)(\alpha)$ should have the same sign. Since $\rho(x)$ is a
polynomial with positive leading coefficient, its $k$ zeros real, simple and
lying in $(-1,1)$, then $sgn\hspace{0.1cm} \rho(\alpha) = (-1)^k$ and $sgn%
\hspace{0.1cm} (\mathscr{D}^j_q \rho)(\alpha) = (-1)^j\hspace{0.1cm} sgn%
\hspace{0.1cm}\rho(\alpha)$ for all $q^j\alpha < -1$. Therefore $\left(%
\mathscr{D}^{j}_q\rho\right)(\alpha)$ and $\left(\mathscr{D}%
^{j-1}_q\rho\right)(\alpha)$ can not have the same sign. We conclude that $%
deg(\rho)=n$ or $deg(\rho)=n-1$. Assume now that $deg(\rho)=n-1$. Then there
is a zero outside $(-1,1)$. From (\ref{eq:innerP=0}) we conclude that $%
\rho(\alpha)\mathbb{H}_n(\alpha;q)$ and $\left(\mathscr{D}%
^{j}_q\rho\right)(\alpha)\left(\mathscr{D}^{j}_q\mathbb{H}%
_n\right)(\alpha;q) $ should differ in sign. For $q^j\alpha< -1$, $sgn%
\hspace{0.1cm} (\mathscr{D}^j_q \rho)(\alpha)=(-1)^j\hspace{0.1cm} sgn%
\hspace{0.1cm}\rho(\alpha)$. Hence $sgn\hspace{0.1cm}(\mathscr{D}^j_q\mathbb{%
H}_n)(\alpha;q) = (-1)^{j+1}\hspace{0.1cm}sgn\hspace{0.1cm}\mathbb{H}%
_n(\alpha;q)$. This implies that the zero outside $(-1,1)$ can not lie in $%
[1,\infty).$\newline



Consider now $\alpha >1$, $q\in(0,1)$ and $j\geq 1$ such that $q^j\alpha>1$.
Then $\rho(x)\mathbb{H}_n(x;q)$ and $(q^j\alpha-x)\rho(x)\mathbb{H}_n(x;q)$
do not change sign in $[-1,1]$. Assume $deg(\rho)\leq n-2$. Hence $\langle
\rho,\mathbb{H}_n\rangle_\lambda =0$ and $\langle (q^j\alpha-x)\rho,\mathbb{H%
}_n\rangle_\lambda = 0$. We conclude now that $\left(\mathscr{D}%
^{j}_q\rho\right)(\alpha)$ and $\left(\mathscr{D}^{j-1}_q\rho\right)(\alpha)$
should differ in sign. This statement does not hold since $sgn \hspace{0.1cm}%
\rho(\alpha) = sgn\hspace{0.1cm}(\mathscr{D}^j_q)(\alpha)=1$ for all $%
q^j\alpha >1$. Hence $deg(\rho)=n$ or $deg(\rho)=n-1$. Assume $deg(\rho)=n-1$%
. Then $\rho(\alpha)\mathbb{H}_n(\alpha;q)$ and $\left(\mathscr{D}%
^{j}_q\rho\right)(\alpha)\left(\mathscr{D}^{j}_q\mathbb{H}%
_n\right)(\alpha;q) $ should differ in sign. Since $sgn \hspace{0.1cm}%
\rho(\alpha) = sgn\hspace{0.1cm}(\mathscr{D}^j_q)(\alpha)=1$, we arrive to $%
sgn\hspace{0.1cm}(\mathscr{D}^j_q\mathbb{H}_n)(\alpha;q) = -sgn \hspace{0.1cm%
}\mathbb{H}_n(\alpha;q)$. This implies that the zero outside $(-1,1)$ can
not lie in $(-\infty,-1]$. Thus for $q^j\alpha\notin (-1,1)$ we have $%
deg(\rho) = n$ or $deg(\rho)=n-1$.


\end{proof}



\begin{remark}
Since $\mathbb{H}_n(x;q)=H_n(x;q)$ for $0\leq n\leq j$, $\mathbb{H}_n(x;q)$
can have a zero outside $(-1,1)$ for $n\geq j+1$.
\end{remark}

Next we provide the explicit value $\lambda_0$ of the mass $\lambda$ such
that for $\lambda>\lambda_0$ at most one zero of $\mathbb{H}_n(x;q)$ is
located outside $(-1,1)$



\begin{corollary}
\label{col:ceros} Let $\lambda>0$, $q\in(0,1)$, $\alpha\notin(-1,1)$ and $%
j\leq n-1 $ such that $q^j\alpha\notin(-1,1)$. Then, the following
statements hold 

\begin{itemize}
\item if $q^j\alpha<-1$, then the smallest zero of $\mathbb{H}_n(x;q)$, $%
\eta_{n,1}=\eta_{n,1}(\lambda)$, satisfies 
\begin{align}
&\eta_{n,1}>-1, \quad \text{for $\lambda<\lambda_0$}  \notag \\
&\eta_{n,1}=-1, \quad \text{for $\lambda=\lambda_0$}  \label{eq:ceros<} \\
&\eta_{n,1}<-1, \quad \text{for $\lambda>\lambda_0$}  \notag
\end{align}
where 
\begin{equation*}
\lambda_0=\lambda_0(n,\alpha,j,q) = \left(\frac{[n]_q^{(j)}H_{n-j}(\alpha;q)%
}{H_n(-1;q)}K_{n-1,q}^{(0,j)}(-1,\alpha)-K_{n-1,q}^{(j,j)}(\alpha,\alpha)%
\right)^{-1}>0
\end{equation*}

\item if $q^j\alpha>1$, then the largest zero, $\eta_{n,n} =
\eta_{n,n}(\lambda)$, satisfies 
\begin{align}
&\eta_{n,n}<1, \quad \text{for $\lambda<\lambda_0$}  \notag \\
&\eta_{n,n}=1, \quad \text{for $\lambda=\lambda_0$}  \label{eq:ceros>} \\
&\eta_{n,n}>1, \quad \text{for $\lambda>\lambda_0$}  \notag
\end{align}
with 
\begin{equation*}
\lambda_{0} = \lambda_0(n,\alpha,j,q)= \left(\frac{[n]_q^{(j)}H_{n-j}(%
\alpha;q)}{H_n(1;q)}K_{n-1,q}^{(0,j)}(1,\alpha)-K_{n-1,q}^{(j,j)}(\alpha,%
\alpha)\right)^{-1}>0
\end{equation*}
\end{itemize}
\end{corollary}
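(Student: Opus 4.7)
The plan is to substitute $x=-1$ directly in the connection formula \eqref{ConxF1} and identify the unique value of the parameter for which $\mathbb{H}_n(-1;q)$ vanishes. Clearing the denominator $1 + \lambda K_{n-1,q}^{(j,j)}(\alpha,\alpha)$ reduces the equation $\mathbb{H}_n(-1;q) = 0$ to a single linear equation in $\lambda$, whose solution is exactly the proposed $\lambda_0$. The case $q^{j}\alpha > 1$ is treated identically by evaluating \eqref{ConxF1} at $x=1$. No further combinatorial identities are needed to extract the closed form of $\lambda_0$.

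The second ingredient is to rewrite
$$\mathbb{H}_n(-1;q) = \frac{H_n(-1;q)\,(1 - \lambda/\lambda_0)}{1 + \lambda K_{n-1,q}^{(j,j)}(\alpha,\alpha)},$$
so that the sign of $\mathbb{H}_n(-1;q)$ as a function of $\lambda$ is controlled by the linear factor $1 - \lambda/\lambda_0$. The denominator is strictly positive since $K_{n-1,q}^{(j,j)}(\alpha,\alpha)>0$ (sum of squared values divided by positive norms), while the sign of $H_n(-1;q)$ is $(-1)^n$ because $H_n$ is monic with all its zeros lying in $(-1,1)$.

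Next I link this sign information with the location of $\eta_{n,1}$. By the preceding theorem, for $q^{j}\alpha < -1$ the polynomial $\mathbb{H}_n(x;q)$ has $n$ real simple zeros and at most one of them lies outside $(-1,1)$; moreover, any such exceptional zero must lie in $(-\infty,-1)$. Since $\mathbb{H}_n$ is monic, $\mathrm{sgn}\,\mathbb{H}_n(x;q) = (-1)^n$ as $x \to -\infty$. For $\lambda < \lambda_0$, $\mathrm{sgn}\,\mathbb{H}_n(-1;q) = (-1)^n$ as well, so no sign change occurs on $(-\infty,-1]$ and hence $\eta_{n,1} > -1$. At $\lambda = \lambda_0$ one has $\mathbb{H}_n(-1;q) = 0$, giving $\eta_{n,1} = -1$. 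For $\lambda > \lambda_0$ the sign flips to $(-1)^{n+1}$, forcing a sign change (and therefore a zero) in $(-\infty,-1)$; uniqueness of this zero comes again from the previous theorem, so $\eta_{n,1} < -1$. The statement for $q^j\alpha > 1$ follows symmetrically by evaluating at $x = 1$ and tracking the largest zero $\eta_{n,n}$.

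The main technical obstacle is justifying the positivity $\lambda_0 > 0$, equivalently
$$\frac{[n]_q^{(j)} H_{n-j}(\alpha;q) K_{n-1,q}^{(0,j)}(-1,\alpha)}{H_n(-1;q)} > K_{n-1,q}^{(j,j)}(\alpha,\alpha).$$
The cleanest route seems to be a continuity argument: the map $\lambda \mapsto \eta_{n,1}(\lambda)$ is continuous, lies in $(-1,1)$ at $\lambda=0$, and as $\lambda \to \infty$ the polynomial $\mathbb{H}_n(x;q)$ approaches $H_n(x;q) - \frac{[n]_q^{(j)} H_{n-j}(\alpha;q)}{K_{n-1,q}^{(j,j)}(\alpha,\alpha)} K_{n-1,q}^{(0,j)}(x,\alpha)$, whose smallest zero lies in $(-\infty,-1)$ under the hypothesis $q^{j}\alpha<-1$; the intermediate value theorem then produces a positive crossing, which must coincide with the algebraic $\lambda_0$ found above. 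Alternatively, one may track the signs of $H_{n-j}(\alpha;q)$, $H_n(-1;q)$ and $K_{n-1,q}^{(0,j)}(-1,\alpha)$ directly, exploiting the classical interlacing of the zeros of $\{H_n(x;q)\}_{n\geq 0}$ on $(-1,1)$.
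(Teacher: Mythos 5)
Your proposal is correct and follows essentially the same route as the paper: evaluate the connection formula (\ref{ConxF1}) at $\tau=\mp 1$ and solve the resulting linear equation in $\lambda$ for $\lambda_0$, your rewriting $\mathbb{H}_n(\tau;q)=H_n(\tau;q)\left(1-\lambda/\lambda_0\right)/\left(1+\lambda K_{n-1,q}^{(j,j)}(\alpha,\alpha)\right)$ together with the preceding zero-location theorem simply making explicit the sign trichotomy that the paper's terse proof leaves implicit. The only point you treat loosely is the positivity $\lambda_0>0$, which you reduce to the unproved claim that the smallest (resp.\ largest) zero of the limiting polynomial $\mathbb{G}_n(x;q)$ lies outside $[-1,1]$; since the paper's own proof asserts this positivity without argument as well, this does not constitute a gap relative to the paper.
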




\begin{proof}
It is suffices to use (\ref{ConxF1}) together with the fact that $\mathbb{H}%
_n(\tau;q) = 0$, with $\tau = -1, 1$, if and only if $\lambda = \lambda_0$ 
\begin{equation*}
\mathbb{H}_{n}(\tau;q)=H_{n}(\tau;q)-\lambda_0 \frac{\lbrack
n]_{q}^{(j)}H_{n-j}(\alpha ;q)}{1+\lambda_0 K_{n-1,q}^{(j,j)}(\alpha ,\alpha
)}K_{n-1,q}^{(0,j)}(\tau,\alpha )=0.
\end{equation*}
therefore 
\begin{equation*}
\lambda_0 = \lambda_0(n,\alpha,j,q) = \left(\frac{[n]_q^{(j)}H_{n-j}(%
\alpha;q)}{H_n(\tau;q)}K_{n-1,q}^{(0,j)}(\tau,\alpha)-K_{n-1,q}^{(j,j)}(%
\alpha,\alpha)\right)^{-1}
\end{equation*}
\end{proof}



notice that for $j=0$, the inner product (\ref{piSob}) is standard. As a
consequence, the zeros of $\mathbb{H}_n(x;q)$ are located inside the convex
hull of the support of the measure. In that case, the zero located outside $%
(-1,1)$ is attracted by the mass point $\alpha$. For $j=0$, $\alpha$
captures either the smallest or largest zero as $\lambda \rightarrow \infty$%
. For $j\geq 1$ the situation is completely different, being possible to
find the value of the zero outside $(-1,1)$ beyond the value of $\alpha$.
The value of $\lambda$, namely $\lambda_\alpha$, for which we locate the
smallest or largest zero of $\mathbb{H}_n(x;q)$ at the mass point $\alpha$
can easily be found



\begin{corollary}
Let $\lambda>0$, $q\in(0,1)$, $\alpha\notin (-1,1)$ and $j\leq n-1$ such
that $q^j\alpha\notin(-1,1)$. Then, the following statements hold

\begin{itemize}
\item if $q^j\alpha<-1$, then the smallest zero $\eta_{n,1}$ satisfies 
\begin{align}
&\eta_{n,1}>\alpha, \quad \text{for $\lambda<\lambda_\alpha$}  \notag \\
&\eta_{n,1}=\alpha, \quad \text{for $\lambda=\lambda_\alpha$}
\label{eq:ceros<a} \\
&\eta_{n,1}<\alpha, \quad \text{for $\lambda>\lambda_\alpha$}  \notag
\end{align}

\item if $q^j\alpha>1$, then the largest zero $\eta_{n,n}$ satisfies 
\begin{align}
&\eta_{n,n}<\alpha, \quad \text{for $\lambda<\lambda_\alpha$}  \notag \\
&\eta_{n,n}=\alpha, \quad \text{for $\lambda=\lambda_\alpha$}
\label{eq:ceros>a} \\
&\eta_{n,n}>\alpha, \quad \text{for $\lambda>\lambda_\alpha$}  \notag
\end{align}
where 
\begin{equation*}
\lambda_\alpha=\lambda_\alpha(n,\alpha,j,q) = \left(\frac{%
[n]_q^{(j)}H_{n-j}(\alpha;q)}{H_n(\alpha;q)}K_{n-1,q}^{(0,j)}(\alpha,%
\alpha)-K_{n-1,q}^{(j,j)}(\alpha,\alpha)\right)^{-1}>0
\end{equation*}
\end{itemize}

\label{col:lambdalfa}
\end{corollary}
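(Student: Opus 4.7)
The plan is to mirror the argument used in Corollary \ref{col:ceros}, replacing the endpoints $\tau=\pm 1$ by $\tau=\alpha$. Concretely, I would start from the connection formula (\ref{ConxF1}), evaluate at $x=\alpha$, and impose $\mathbb{H}_n(\alpha;q)=0$. This yields
\begin{equation*}
H_n(\alpha;q)\left(1+\lambda K_{n-1,q}^{(j,j)}(\alpha,\alpha)\right)=\lambda\,[n]_q^{(j)}H_{n-j}(\alpha;q)\,K_{n-1,q}^{(0,j)}(\alpha,\alpha),
\end{equation*}
which is a linear equation in $\lambda$ whose unique nontrivial solution is exactly the expression defining $\lambda_\alpha$ in the statement. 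The first step is therefore a routine algebraic rearrangement.

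Second, I would check the sign requirements. Since $\alpha\notin(-1,1)$, the polynomial $H_n(\alpha;q)$ does not vanish (the zeros of $H_n(x;q)$ live in $(-1,1)$), so the expression for $\lambda_\alpha$ is well defined. To verify $\lambda_\alpha>0$ one would exploit the positive-definiteness of the kernel diagonal $K_{n-1,q}^{(j,j)}(\alpha,\alpha)\ge 0$ together with an analysis of the sign of $H_{n-j}(\alpha;q)\,K_{n-1,q}^{(0,j)}(\alpha,\alpha)/H_n(\alpha;q)$ in the two regimes $q^j\alpha<-1$ and $q^j\alpha>1$, much as was implicitly used in Corollary \ref{col:ceros} to obtain $\lambda_0>0$.

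Third, the three sub-cases in (\ref{eq:ceros<a}) and (\ref{eq:ceros>a}) follow from the continuous dependence on $\lambda$ of the extreme zero together with the fact that, by the previous theorem and Corollary \ref{col:ceros}, the extreme zero $\eta_{n,1}(\lambda)$ (resp.\ $\eta_{n,n}(\lambda)$) is a continuous and strictly monotone function of $\lambda\in(0,\infty)$: it equals $-1$ (resp.\ $1$) only at $\lambda=\lambda_0$, lies inside $(-1,1)$ for $\lambda<\lambda_0$, and escapes beyond the endpoint for $\lambda>\lambda_0$. Since $\mathbb{H}_n(\alpha;q)=0$ is equivalent to $\lambda=\lambda_\alpha$, and since $\alpha$ lies on the correct side of the endpoint, monotonicity forces $\eta_{n,1}(\lambda)$ (resp.\ $\eta_{n,n}(\lambda)$) to cross the value $\alpha$ precisely at $\lambda_\alpha$, giving the three claimed inequalities.

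The main obstacle is justifying the monotonicity of $\eta_{n,1}(\lambda)$ (or $\eta_{n,n}(\lambda)$) as $\lambda$ varies. In the continuous Sobolev setting this is standard (the zeros are simple, depend continuously on $\lambda$, and cannot cross each other), and the argument transfers to the $q$-discrete case with only cosmetic changes: one writes $\mathbb{H}_n(x;q)$ via (\ref{ConxF1}) as an affine function of $\lambda$ at each fixed $x$, so the zero set evolves continuously in $\lambda$, and the sign information extracted in the previous theorem (which pinned down $sgn\,\mathbb{H}_n(\alpha;q)$ relative to $sgn\,(\mathscr{D}_q^j\mathbb{H}_n)(\alpha;q)$ when $q^j\alpha\notin(-1,1)$) forces the crossing at $\lambda_\alpha$ to be from one side of $\alpha$ to the other, rather than tangential. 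Once this monotonicity is in place, the corollary follows in a single line.
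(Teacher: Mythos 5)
Your proposal follows essentially the same route as the paper: the paper's proof simply evaluates the connection formula (\ref{ConxF1}) at $x=\alpha$, imposes $\mathbb{H}_n(\alpha;q)=0$, and solves the resulting equation for $\lambda$ to obtain $\lambda_\alpha$, exactly as in Corollary \ref{col:ceros} with $\tau$ replaced by $\alpha$. Your additional discussion of the sign of $\lambda_\alpha$ and of the continuity and monotonicity of the extreme zero in $\lambda$ fills in details the paper leaves implicit (the monotonicity is only established later, in Theorem \ref{theorem:interlacing}), so your argument is correct and, if anything, more complete.
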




\begin{proof}
The proof is exactly the same as in corollary \ref{col:ceros} taking into
account that $\mathbb{H}_n(\alpha,q) = 0$ if and only if $%
\lambda=\lambda_\alpha$.
\end{proof}



\begin{proposition}
Let $\left\{\mathbb{H}_n(x;q)\right\}_{n>0}$ be the sequence of $q-$ Hermite
I Sobolev-type orthogonal polynomials. Then, the following statement holds 
\begin{equation}
(\mathscr{D}^j_q\mathbb{H}_n)(\alpha) = \left(1-\frac{\lambda
K^{(j,j)}_{n-1}(\alpha,\alpha)}{1+\lambda K^{(j,j)}_{n-1}(\alpha,\alpha)}%
\right)(\mathscr{D}^j_q H_n)(\alpha)  \label{eq:propDq}
\end{equation}
\end{proposition}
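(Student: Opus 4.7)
The plan is to apply the $j$-th $q$-derivative operator $\mathscr{D}^j_q$ (acting on the variable $x$) directly to the connection formula (\ref{ConxF1}) and then evaluate at $x=\alpha$. Since (\ref{ConxF1}) reads
\[
\mathbb{H}_{n}(x;q)=H_{n}(x;q)-\lambda \,\frac{[n]_{q}^{(j)}H_{n-j}(\alpha ;q)}{1+\lambda K_{n-1,q}^{(j,j)}(\alpha ,\alpha )}\,K_{n-1,q}^{(0,j)}(x,\alpha),
\]
the right-hand side is a linear combination, with coefficients independent of $x$, of $H_n(x;q)$ and $K_{n-1,q}^{(0,j)}(x,\alpha)$; so $\mathscr{D}^j_q$ passes through the scalar factor without difficulty.

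The two derivative evaluations needed are already available from the preliminaries. First, applying the forward shift operator (\ref{FSop}) gives
\[
\mathscr{D}^j_q H_n(x;q)=[n]_q^{(j)}H_{n-j}(x;q),
\]
so at $x=\alpha$ this equals $[n]_q^{(j)}H_{n-j}(\alpha;q)$. Second, by the very definition of the partial $q$-derivatives of the kernel in (\ref{eq:KernelDij}), we have
\[
\mathscr{D}^{j}_{q,x}K_{n-1,q}^{(0,j)}(x,\alpha)\Big|_{x=\alpha}=K_{n-1,q}^{(j,j)}(\alpha,\alpha).
\]
Substituting these two evaluations yields
\[
(\mathscr{D}^j_q\mathbb{H}_n)(\alpha)=[n]_q^{(j)}H_{n-j}(\alpha;q)-\lambda\,\frac{[n]_{q}^{(j)}H_{n-j}(\alpha ;q)}{1+\lambda K_{n-1,q}^{(j,j)}(\alpha ,\alpha )}\,K_{n-1,q}^{(j,j)}(\alpha,\alpha).
\]

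Finally, I would factor out the common factor $[n]_q^{(j)}H_{n-j}(\alpha;q)$ and identify it, once again via (\ref{FSop}), with $(\mathscr{D}^j_q H_n)(\alpha)$, which delivers exactly (\ref{eq:propDq}). There is no real obstacle here: the argument is a one-line computation once one recognizes that the scalar coefficient in front of the kernel term in (\ref{ConxF1}) is unaffected by $\mathscr{D}^j_q$ and that both derivative evaluations reduce to quantities already named in the paper.
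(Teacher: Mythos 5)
Your proof is correct and is essentially the paper's own argument: the paper applies $\mathscr{D}^j_q$ to the connection formula (\ref{eq:DqconexionI}) and evaluates at $\alpha$, which is exactly your computation (you start from (\ref{ConxF1}), differing only by the substitution $[n]_q^{(j)}H_{n-j}(\alpha;q)=(\mathscr{D}^j_q H_n)(\alpha)$ via (\ref{FSop})). Your explicit identification $\mathscr{D}^j_{q,x}K_{n-1,q}^{(0,j)}(x,\alpha)\big|_{x=\alpha}=K_{n-1,q}^{(j,j)}(\alpha,\alpha)$ from (\ref{eq:KernelDij}) is precisely the step the paper leaves implicit.
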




\begin{proof}
Applying $\mathscr{D}^j_q$ to (\ref{eq:DqconexionI}) and evaluating at $%
\alpha$ we arrive to the desired result.
\end{proof}



Now we present a theorem stating the interlacing of the zeros of $\mathbb{H}%
_n(x;q)$ and $H_n(x;q)$



\begin{theorem}
Let $\lambda>0$, $q\in (0,1)$, $j\le n-1$ and $\alpha\notin(-1,1)$ such that 
$q^j\alpha \notin (-1,1)$. Let $\eta_1,\eta_2,...,\eta_n$ and $%
x_1,x_2,...,x_n$ denote the zeros of $\mathbb{H}_n(x)$ and $H_n(x)$,
respectively. Then, the following interlacing relations are satisfied

\begin{itemize}
\item If $q^j\alpha<-1$ then $\eta_1<x_1$ and 
\begin{equation*}
x_i<\eta_{i+1}<x_{i+1}, \quad i = 1,2,...n-1
\end{equation*}

\item If $q^j\alpha>1$ then $\eta_n>x_n$ and 
\begin{equation*}
x_i< \eta_i<x_{i+1}, \quad i = 1,2,...,n-1
\end{equation*}
\end{itemize}
\end{theorem}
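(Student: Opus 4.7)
The plan is to show that $\mathbb{H}_n(x;q)$ alternates in sign at the $n$ zeros $x_1<x_2<\cdots<x_n$ of $H_n(x;q)$. By Bolzano's theorem this will produce $n-1$ zeros of $\mathbb{H}_n$ located one in each open interval $(x_i,x_{i+1})$, and since $\deg\mathbb{H}_n=n$ the remaining zero must lie outside $[x_1,x_n]$. The side on which this extra zero lies will then be read off from the signs of $\mathbb{H}_n(x_1;q)$ and $\mathbb{H}_n(x_n;q)$ together with the monic-leading-coefficient asymptotics of $\mathbb{H}_n$ and Corollary~\ref{col:ceros}.

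For the sign alternation, evaluating~(\ref{ConxF1}) at $x_i$ and using $H_n(x_i;q)=0$ yields
\begin{equation*}
\mathbb{H}_n(x_i;q)=-C_n\,K_{n-1,q}^{(0,j)}(x_i,\alpha),\qquad C_n=\frac{\lambda\,[n]_q^{(j)}H_{n-j}(\alpha;q)}{1+\lambda\,K_{n-1,q}^{(j,j)}(\alpha,\alpha)},
\end{equation*}
and Lemma~\ref{S1-LemmaKernel0j} further reduces the right-hand side to $-C_n\,\mathcal{B}(x_i,\alpha)\,H_{n-1}(x_i;q)$. The classical interlacing of zeros of $H_n$ and $H_{n-1}$ makes $H_{n-1}(x_i;q)$ alternate in sign with $i$, so the interlacing of zeros reduces to the claim that $C_n\cdot\mathcal{B}(x_i,\alpha)$ keeps a fixed sign for all $i=1,\ldots,n$.

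Verifying this sign-stability is the main obstacle. The constant $C_n$ is unproblematic, since $1+\lambda K_{n-1,q}^{(j,j)}(\alpha,\alpha)>0$ and $H_{n-j}(\alpha;q)$ has the definite sign $1$ for $\alpha>1$ and $(-1)^{n-j}$ for $\alpha<-1$ (a monic polynomial whose zeros lie in $(-1,1)$). For $\mathcal{B}(x,\alpha)$ one uses the $q$-Taylor identity
\begin{equation*}
H_n(x)=Q_{j,q}(x,\alpha,H_n)+\sum_{k=j+1}^{n}\frac{[n]_q^{(k)}H_{n-k}(\alpha;q)}{[k]_q!}(x\boxminus_q\alpha)^k
\end{equation*}
and the factorisation $(x\boxminus_q\alpha)^k=(x\boxminus_q\alpha)^{j+1}(x\boxminus_q q^{j+1}\alpha)^{k-j-1}$ to cancel the factor $(x\boxminus_q\alpha)^{j+1}$ appearing in the denominator of $\mathcal{B}$ against the common factor in the remainder, reducing the problem at $x=x_i$ to controlling the sign of a sum whose terms one tracks via the parity-dependent signs of $H_{n-k}(\alpha;q)$ (all positive when $\alpha>1$, alternating in $k$ when $\alpha<-1$) together with the signs of the factors $(x_i-q^m\alpha)$ that assemble the $q$-powers $(x_i\boxminus_q q^{j+1}\alpha)^{k-j-1}$. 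Under the assumption $q^j\alpha\notin(-1,1)$ with $0<q<1$, a short case analysis for $q^j\alpha<-1$ versus $q^j\alpha>1$ then yields the required sign stability of $\mathcal{B}(x_i,\alpha)$ as $i$ varies.

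Once alternation of $\mathbb{H}_n(x_i;q)$ is established and $n-1$ zeros of $\mathbb{H}_n$ have been placed by Bolzano's theorem in $(x_1,x_n)$, the location of the remaining zero is determined by comparing signs at the boundary. If $q^j\alpha<-1$, the sign of $\mathbb{H}_n(x_n;q)$ forced by the alternation together with the monic asymptotics $\mathbb{H}_n(x)\to+\infty$ as $x\to+\infty$ rules out a zero in $(x_n,\infty)$, so the remaining zero must lie in $(-\infty,x_1)$, giving $\eta_1<x_1$ and the interlacing $x_i<\eta_{i+1}<x_{i+1}$ for $i=1,\ldots,n-1$. The case $q^j\alpha>1$ is perfectly symmetric and produces $\eta_n>x_n$ with $x_i<\eta_i<x_{i+1}$.
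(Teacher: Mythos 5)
Your reduction is sound as far as it goes: evaluating (\ref{ConxF1}) at a zero $x_i$ of $H_n(x;q)$ and invoking Lemma \ref{S1-LemmaKernel0j} gives $\mathbb{H}_n(x_i;q)=-C_n\,\mathcal{B}(x_i,\alpha)H_{n-1}(x_i;q)$, and since $\mathrm{sgn}\,H_{n-1}(x_i;q)=(-1)^{n-i}$, the theorem is equivalent to knowing that $C_n\,\mathcal{B}(x_i,\alpha)$ keeps one and the same (known) sign for every $i$. That is precisely the step you do not prove, and the route you sketch for it breaks down. After cancelling $(x\boxminus_q\alpha)^{j+1}$ against the $q$-Taylor remainder you obtain, at $x=x_i$,
\begin{equation*}
\mathcal{B}(x_i,\alpha)=\frac{[j]_q!}{||H_{n-1}||^{2}}\sum_{k=j+1}^{n}\frac{[n]_q^{(k)}H_{n-k}(\alpha;q)}{[k]_q!}\prod_{m=j+1}^{k-1}\left(x_i-q^{m}\alpha\right),
\end{equation*}
and the terms of this sum do not share a sign: for $q^{j}\alpha<-1$ the values $H_{n-k}(\alpha;q)$ alternate in sign with $k$, and in both cases the hypothesis $q^{j}\alpha\notin(-1,1)$ puts no restriction whatsoever on the points $q^{m}\alpha$ with $m\ge j+1$, which can perfectly well lie inside $(-1,1)$ (take $q=1/2$, $j=1$, $\alpha=5/2$), so the factors $x_i-q^{m}\alpha$ change sign both with $m$ and with the node $x_i$. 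Consequently no bookkeeping of ``parity-dependent signs'' of the individual terms can deliver the required sign stability; that stability is essentially the content of the theorem itself, so the heart of the argument is missing (and the same unknown sign is what you would need at $x_1$, $x_n$ to place the exceptional zero in your final paragraph).

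The paper obtains the sign of $\mathbb{H}_n(x_i;q)$ without ever touching $\mathcal{A}$ and $\mathcal{B}$: it applies the Gaussian quadrature rule for $d_q\mu$ at the nodes $x_1,\dots,x_n$ to $\Pi_{2n-1}(x)=\mathbb{H}_n(x;q)H_n(qx;q)/(qx-x_i)$, so that the node sum reduces to $\omega_i\mathbb{H}_n(x_i;q)(\mathscr{D}_qH_n)(x_i)$ with $\omega_i>0$, while on the other side the Sobolev orthogonality of $\mathbb{H}_n$ against the degree-$(n-1)$ factor leaves only the boundary term $-\lambda(\mathscr{D}_q^{j}\mathbb{H}_n)(\alpha)\,\mathscr{D}_q^{j}\bigl(H_n(qx;q)/(qx-x_i)\bigr)(\alpha)$, and (\ref{eq:propDq}) replaces $(\mathscr{D}_q^{j}\mathbb{H}_n)(\alpha)$ by a positive multiple of $(\mathscr{D}_q^{j}H_n)(\alpha)$. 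The sign question is thereby reduced to $j$-th $q$-derivatives, evaluated at the exterior point $\alpha$, of polynomials with positive leading coefficient and all real zeros inside the interval, whose signs are determined ($1$ on the right of the interval, $(-1)^{\deg-j}$ on the left). If you want to keep your kernel-based route you would need an argument of comparable strength pinning down the sign of the $q$-Taylor section $Q_{j,q}(x_i,\alpha,H_n)$ at every node; deducing it from the interlacing you are trying to prove would be circular, so as written the proposal has a genuine gap at its central step.
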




\begin{proof}
The proof here is based on the the Gaussian quadrature rule 
\begin{equation*}
\displaystyle\sum_{i=1}^n \omega_i \Pi_{2n-1}(x_i) = \displaystyle%
\int_{-1}^{1}\Pi_{2n-1}(x) d_q\mu(x)
\end{equation*}
where $\omega_i>0$ for all $i=1,2,...,n$ and $\Pi_{2n-1}(x)$ is a polynomial
of degree $2n-1$. Take $\Pi_{2n-1}(x) = \mathbb{H}_n(x)H_n(qx)/(qx-x_i)$.
Then, with (\ref{eq:propDq})

\begin{align*}
\omega_i\mathbb{H}_n(x_i)(\mathscr{D}_qH_n)(x_i) =& \displaystyle%
\int_{-1}^{1}\Pi_{2n-1}(x) d_q\mu(x) \\
=& \langle \mathbb{H}_n,\frac{H_n(qx)}{(qx-x_i)}\rangle_\lambda - \lambda(%
\mathscr{D}^j_q\mathbb{H}_n)(\alpha)\mathscr{D}^j_q \left(\frac{H_n(qx)}{%
qx-x_i}\right)(\alpha) \\
=& - \lambda(\mathscr{D}^j_q\mathbb{H}_n)(\alpha)\mathscr{D}^j_q \left(\frac{%
H_n(qx)}{qx-x_i}\right)(\alpha) \\
=&-\lambda \left(1-\frac{\lambda K^{(j,j)}_{n-1}(\alpha,\alpha)}{1+\lambda
K^{(j,j)}_{n-1}(\alpha,\alpha)}\right)(\mathscr{D}^j_qH_n)(\alpha)\mathscr{D}%
^j_q \left(\frac{H_n(qx)}{qx-x_i}\right)(\alpha)
\end{align*}
Since $H_n(qx)/(qx-x_i)$ is a monic polynomial of degree $n-1$ with $n-1$
simple real zeros in $(-1,1)$ we have 
\begin{equation*}
sgn\hspace{0.1cm} \mathscr{D}^j_q \left(\frac{H_n(qx)}{qx-x_i}%
\right)(\alpha) = \left\{ 
\begin{array}{lll}
(-1)^{n-1-j} & if & q^j\alpha<-1 \\ 
&  &  \\ 
1 & if & q^j\alpha>1%
\end{array}
\right.
\end{equation*}
moreover, 
\begin{equation*}
sgn\hspace{0.1cm} (\mathscr{D}^j_qH_{n})(\alpha) = \left\{ 
\begin{array}{lll}
(-1)^{n-j} & if & q^j\alpha <-1 \\ 
&  &  \\ 
1 & if & q^j\alpha >1%
\end{array}
\right.
\end{equation*}

and, since $H_n(x;q)$ is a classical sequence, 
\begin{equation*}
sgn\hspace{0.1cm} (\mathscr{D}_qH_{n})(x_i) = (-1)^{n-i}
\end{equation*}

then, for $q^j\alpha \notin (-1,1)$ we finally obtain 
\begin{equation*}
sgn \hspace{0.1cm} \mathbb{H}_n(x_i;q) = (-1)^{n-i}
\end{equation*}
which means that $\mathbb{H}_n(x;q)$ changes sign in each interval $%
(x_i,x_{i+1})$ for $i=1,2,...n-1$.\newline
\end{proof}



Our goal now is to obtain some results regarding the behaviour of the zeros
of $\mathbb{H}_n(x;q)$. For this purpose we base our analysis on the
following lemma, which addresses the behavior and asymptotic characteristics
of the zeros of a linear combination of two polynomials of degree $n$
exhibiting interlacing properties of their zeros. In the following we will
refer to this lemma as the \textit{\textbf{Interlacing lemma}}, and although
the proof is omitted here (see \cite[Lemma 1]{Bracc}), it plays a crucial
role in our analysis. 



\begin{lemma}
(\textbf{Interlacing lemma}) Let $p_n(x) = a(x-x_1)\cdots(x-x_n)$ and $%
q_n(x) = b(x-y_1)\cdots(x-y_n)$ be polynomials with real and simple zeros,
where $a$ and $b$ are real positive constants. If 
\begin{equation*}
y_1<x_1<\cdots< y_n<x_n
\end{equation*}
then, for any real constant $c>0$, the polynomial 
\begin{equation}
f_n(x) = p_n(x) +cq_n(x)  \label{eq:interlacinglemma}
\end{equation}
has $n$ real zeros $\eta_1,\cdots,\eta_n$ which interlace with the zeros of $%
p_n(x)$ and $q_n(x)$ as follows 
\begin{equation*}
y_1<\eta_1<x_1 \cdots y_n<\eta_n<x_n
\end{equation*}
Moreover, each $\eta_k = \eta_k(c)$ is a decreasing function of $c$, and for
each $k = 1,...,n$, 
\begin{equation*}
\begin{array}{ccc}
\displaystyle\lim_{c\rightarrow \infty} \eta_k = y_k & and & \displaystyle%
\lim_{c\rightarrow \infty}c[\eta_k-y_k] = -\frac{p_n(y_k)}{q^{\prime }_n(y_k)%
}%
\end{array}%
\end{equation*}
\end{lemma}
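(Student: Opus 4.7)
The plan is to establish the Interlacing lemma in three stages: first, a sign-chasing argument at the nodes $\{x_k\}$ and $\{y_k\}$ to produce and locate the zeros of $f_n$; second, implicit differentiation to obtain monotonicity of each $\eta_k(c)$; and third, a rescaling of the defining equation $f_n(\eta_k(c))=0$ to extract both asymptotic limits.

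For the first stage, I would evaluate $f_n$ at the interlacing nodes. Since $q_n(y_k)=0$ and $p_n(x_k)=0$, we have $f_n(y_k)=p_n(y_k)$ and $f_n(x_k)=c\,q_n(x_k)$. Counting the negative factors in each product using the strict interlacing hypothesis yields $\operatorname{sgn} p_n(y_k)=(-1)^{n-k+1}$ and $\operatorname{sgn} q_n(x_k)=(-1)^{n-k}$, so $f_n(y_k)$ and $f_n(x_k)$ have opposite signs for every $k$. The intermediate value theorem then produces a zero $\eta_k\in(y_k,x_k)$ for each $k=1,\ldots,n$, and these intervals are pairwise disjoint by the interlacing hypothesis. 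Since the leading coefficient of $f_n$ is $a+bc>0$, the polynomial $f_n$ has degree exactly $n$, so the $\eta_k$ exhaust its zeros and are simple.

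For the second stage, I would apply the implicit function theorem to $F(x,c)=p_n(x)+c\,q_n(x)$ at $(\eta_k(c),c)$, obtaining $\eta_k'(c)=-q_n(\eta_k)/f_n'(\eta_k)$. A parity count shows that both numerator and denominator have sign $(-1)^{n-k}$: for $q_n(\eta_k)$, because the only zero of $q_n$ in $[y_k,x_k)$ is $y_k$, so $q_n(\eta_k)$ inherits the sign of $q_n(x_k)$; for $f_n'(\eta_k)$, by writing $f_n'(\eta_k)=(a+bc)\prod_{i\neq k}(\eta_k-\eta_i)$ and counting the factors with $i>k$. Hence $\eta_k'(c)<0$, proving monotonicity. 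For the third stage, since $\eta_k(c)$ is decreasing and bounded below by $y_k$, its limit $L_k\in[y_k,x_k)$ exists. Dividing $p_n(\eta_k)+c\,q_n(\eta_k)=0$ by $c$ and passing to the limit forces $q_n(L_k)=0$, and the trapping interval forces $L_k=y_k$. Rewriting the same identity as
\[
c[\eta_k-y_k]=-\frac{p_n(\eta_k)}{q_n(\eta_k)/(\eta_k-y_k)}
\]
and letting $c\to\infty$ (so $\eta_k\to y_k$) yields the second limit by the definition of $q_n'(y_k)$, together with the fact that $q_n'(y_k)\neq 0$ (since $y_k$ is a simple zero).

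The main obstacle will be bookkeeping the parities consistently across the four sign counts involved (at $p_n(y_k)$, $q_n(x_k)$, $q_n(\eta_k)$, and $f_n'(\eta_k)$); an off-by-one error in any of them would flip the direction of monotonicity or spoil the existence argument. A minor subtlety is ruling out that $\eta_k$ converges to some $y_j$ with $j\neq k$, but this is immediate from the trapping interval $(y_k,x_k)$ combined with monotonicity. Positivity of $a$, $b$, $c$ is used only to ensure $a+bc>0$ and that $c$ does not change sign during the differentiation argument.
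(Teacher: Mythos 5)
Your proposal is correct and complete: the sign counts $\operatorname{sgn}p_n(y_k)=(-1)^{n-k+1}$, $\operatorname{sgn}q_n(x_k)=\operatorname{sgn}q_n(\eta_k)=\operatorname{sgn}f_n'(\eta_k)=(-1)^{n-k}$ all check out, the IVT/degree argument locates and exhausts the zeros, the implicit-function step gives $\eta_k'(c)=-q_n(\eta_k)/f_n'(\eta_k)<0$, and the rescaling of $p_n(\eta_k)+c\,q_n(\eta_k)=0$ correctly yields both $\eta_k\to y_k$ and $c[\eta_k-y_k]\to -p_n(y_k)/q_n'(y_k)$ via the difference quotient $q_n(\eta_k)/(\eta_k-y_k)\to q_n'(y_k)\neq 0$. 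Note, however, that the paper itself does not prove this lemma: it is quoted verbatim with the proof omitted and a citation to Bracciali, Dimitrov and Sri Ranga \cite[Lemma 1]{Bracc}, so there is no in-paper argument to compare against; your proof is the standard self-contained one (essentially the argument of the cited reference), and the only cosmetic points worth making explicit are that $p_n(y_k)\neq 0$ (so $\eta_k>y_k$ strictly for every finite $c$, which you implicitly use) and that the locally defined IFT branches patch into a single function $\eta_k(c)$ on $(0,\infty)$ because $\eta_k(c)$ is the unique zero of $f_n$ in $(y_k,x_k)$.
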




In order to apply the interlacing lemma and to study the dynamics of the
zeros of the q-Hermite I-Sobolev type polynomials when $\lambda$ tends to
infinity, we introduce the limiting polynomial

\begin{equation}
\mathbb{G}_n(x;q) = \displaystyle\lim_{\lambda \rightarrow \infty} \mathbb{H}%
_n(x;q) = H_n(x) - \frac{[n]^{(j)}_q H_{n-j}(\alpha;q)}{K^{(j,j)}_{n-1,q}(%
\alpha,\alpha)}K^{(0,j)}_{n-1,q}(x;\alpha)  \label{eq:limitingG}
\end{equation}

next, it is useful to normalize the connection formula (\ref{ConxF1}) as
follows



\begin{proposition}
The sequence of polynomials $\mathcal{H}_n(x;q)= \left(1+\lambda
K_{n-1,q}^{(j,j)}(\alpha,\alpha)\right)\mathbb{H}_n(x;q)$ satisfies the
identity

\begin{equation}
\mathcal{H}_n(x;q) = H_n(x;q) +\lambda K_{n-1,q}^{(j,j)}(\alpha,\alpha) 
\mathbb{G}_n(x;q)  \label{eq:Hinterlacing}
\end{equation}
\end{proposition}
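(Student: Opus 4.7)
The plan is purely computational: I would simply multiply the connection formula~(\ref{ConxF1}) through by the factor $1+\lambda K_{n-1,q}^{(j,j)}(\alpha,\alpha)$ and match the result with the right-hand side of~(\ref{eq:Hinterlacing}) after substituting the definition~(\ref{eq:limitingG}) of the limiting polynomial.

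More concretely, the first step is to write out the left-hand side directly from the definition $\mathcal{H}_n(x;q)=\bigl(1+\lambda K_{n-1,q}^{(j,j)}(\alpha,\alpha)\bigr)\mathbb{H}_n(x;q)$ combined with~(\ref{ConxF1}). The denominator $1+\lambda K_{n-1,q}^{(j,j)}(\alpha,\alpha)$ in the second term of~(\ref{ConxF1}) cancels against the multiplying factor, yielding
\begin{equation*}
\mathcal{H}_n(x;q)=\bigl(1+\lambda K_{n-1,q}^{(j,j)}(\alpha,\alpha)\bigr)H_n(x;q)-\lambda [n]_q^{(j)}H_{n-j}(\alpha;q)\,K_{n-1,q}^{(0,j)}(x,\alpha).
\end{equation*}

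The second step is to expand the target right-hand side $H_n(x;q)+\lambda K_{n-1,q}^{(j,j)}(\alpha,\alpha)\,\mathbb{G}_n(x;q)$ using the explicit expression~(\ref{eq:limitingG}) for $\mathbb{G}_n(x;q)$. Distributing $\lambda K_{n-1,q}^{(j,j)}(\alpha,\alpha)$ across the two terms of $\mathbb{G}_n(x;q)$, the kernel factor $K_{n-1,q}^{(j,j)}(\alpha,\alpha)$ in the numerator cancels against the one appearing in the denominator of $\mathbb{G}_n(x;q)$, producing exactly the same expression obtained in the previous step. Matching the two yields~(\ref{eq:Hinterlacing}).

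There is no genuine obstacle here: the only thing to watch is that the kernel $K_{n-1,q}^{(j,j)}(\alpha,\alpha)$ is nonzero (which is guaranteed because it is a sum of squares of nonvanishing terms, being the diagonal of a reproducing kernel evaluated at $\alpha\notin(-1,1)$), so that $\mathbb{G}_n(x;q)$ is well defined as the pointwise limit of $\mathbb{H}_n(x;q)$ as $\lambda\to\infty$, and so that the cancellations used in the two steps are legitimate. Other than this, the argument reduces to bookkeeping of the two terms on each side.
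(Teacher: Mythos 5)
Your proposal is correct and follows essentially the same route as the paper: the paper solves~(\ref{eq:limitingG}) for $K_{n-1,q}^{(0,j)}(x,\alpha)$ and substitutes into~(\ref{ConxF1}) before clearing the denominator, while you clear the denominator first and expand both sides, which is the same algebraic verification in a different order. Your closing caveat about $K_{n-1,q}^{(j,j)}(\alpha,\alpha)$ being positive matches the paper's own remark, with the minor precision that the terms with $k<j$ in that sum vanish, so positivity (and hence the well-definedness of $\mathbb{G}_n$) holds for $n\ge j+1$, exactly as the paper states.
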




\begin{proof}
From (\ref{eq:limitingG}), we have

\begin{equation*}
K_{n-1,q}^{(0,j)}(x,\alpha) = \frac{K_{n-1,q}^{(j,j)}(\alpha,\alpha)}{%
[n]_q^{(j)}H_{n-j}(x;q)}\left[H_n(x;q)-\mathbb{G}_n(x;q)\right]
\end{equation*}
combining the above expression together with (\ref{ConxF1}) we get 
\begin{equation*}
\mathbb{H}_n(x;q) = H_n(x;q) - \frac{\lambda K_n^{(j,j)}(\alpha,\alpha)}{%
1+\lambda K_n^{(j,j)}(\alpha,\alpha)}\left[H_n(x;q)-\mathbb{G}_n(x;q)\right]
\end{equation*}
then, multiplying the above equation by $1+\lambda
K_n^{(j,j)}(\alpha,\alpha) $ one is able to get 
\begin{equation}
\left(1+\lambda K_n^{(j,j)}(\alpha,\alpha)\right)\mathbb{H}_n(x;q) =
H_n(x;q) +\lambda K_{n-1,q}^{(j,j)}(\alpha,\alpha) \mathbb{G}_n(x;q)
\end{equation}
which yields (\ref{eq:Hinterlacing}). Finally, from (\ref{eq:KernelDij}) 
\begin{equation*}
K_{n-1,q}^{(j,j)}(\alpha,\alpha) = \displaystyle\sum_{k=0}^{n-1}\frac{%
\left([k]_q^{(j)}H_{k-j}(\alpha;q)\right)^2}{||H_k||^2} >0
\end{equation*}
for $n\ge j+1$.
\end{proof}



In view of the positiveness of $K_{n-1,q}^{(j,j)}(\alpha,\alpha)$ for $n\ge
j+1$ and the interlacing of the zeros of $H_n(x;q)$ and $\mathbb{G}_n(x;q)$,
which is a Sobolev polynomial, from (\ref{eq:Hinterlacing}) we are in the
hypothesis of the Interlacing lemma and we immediately conclude the
following results.



\begin{theorem}
Let $\lambda>0$, $0<q<1$, $\alpha\notin(-1,1)$ and $j\in \mathbb{N}$ such
that $q^j\alpha\notin(-1,1)$ and $Q^{(n)}_j(x,\alpha)$ does not have its
zeros in $S_\mu$ for all $j\le n-1$ . Then, the zeros of $\mathbb{H}_n(x;q)$%
, $\left\{\eta_{n,r}\right\}_{r=1}^n$, the zeros of $H_n(x;q)$, $%
\left\{x_{n,r}\right\}_{r=1}^n$, and the zeros of $\mathbb{G}_n(x;q)$, $%
\left\{y_{n,r}\right\}_{r=1}^n$, satisfy the following interlacing relations

\begin{itemize}
\item if $q^j\alpha<-1$ then 
\begin{equation*}
y_{n,1}<\eta_{n,1}<x_{n,1}\cdots y_{n,n}<\eta_{n,n}<x_{n,n}
\end{equation*}
for every $n\in\mathbb{N}$. Moreover, each $\eta_{n,l} = \eta_{n,l}(\lambda)$
is a decreasing function of $\lambda$ and for each $l=1,...,n$ 
\begin{equation*}
\displaystyle\lim_{\lambda\rightarrow \infty}\eta_{n,l}(\lambda)=y_{n,l} \\
\end{equation*}
and 
\begin{equation*}
\displaystyle\lim_{\lambda\rightarrow\infty}\lambda[\eta_{n,l}(%
\lambda)-y_{n,l}]=-\frac{H_n(y_{n,l};q)}{\mathbb{G}^{\prime }_n(y_{n,l};q)}
\end{equation*}


\item if $q^j\alpha>1$ then 
\begin{equation*}
x_{n,1}<\eta_{n,1}<y_{n,1}\cdots x_{n,n}<\eta_{n,n}<y_{n,n}
\end{equation*}
hold for every $n\in\mathbb{N}$. Moreover, each $\eta_{n,l} =
\eta_{n,l}(\lambda)$ is an increasing function of $\lambda$ and for each $%
l=1,...,n$ 
\begin{equation*}
\displaystyle\lim_{\lambda\rightarrow \infty}\eta_{n,l}(\lambda)=y_{n,l}
\end{equation*}
and 
\begin{equation*}
\displaystyle\lim_{\lambda\rightarrow\infty}\lambda[\eta_{n,l}(%
\lambda)-y_{n,l}]=-\frac{H_n(y_{n,l};q)}{\mathbb{G}^{\prime }_n(y_{n,l};q)}
\end{equation*}
\end{itemize}

\label{theorem:interlacing}
\end{theorem}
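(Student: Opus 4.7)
The plan is to reduce the statement to the Interlacing Lemma applied to the normalized identity~(\ref{eq:Hinterlacing}),
\[
\mathcal{H}_n(x;q)=H_n(x;q)+\lambda\, K_{n-1,q}^{(j,j)}(\alpha,\alpha)\,\mathbb{G}_n(x;q).
\]
Since $\mathcal{H}_n$ and $\mathbb{H}_n$ differ only by the positive scalar $1+\lambda K_{n-1,q}^{(j,j)}(\alpha,\alpha)$, they share the same zeros, and for $n\geq j+1$ one has $K_{n-1,q}^{(j,j)}(\alpha,\alpha)>0$, so the right-hand side has precisely the shape $p_n+c\,q_n$ with $c>0$ required by the lemma.

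Two preliminary facts must be established before the lemma can be invoked. First, $\mathbb{G}_n(x;q)$ should have $n$ simple real zeros $\{y_{n,r}\}_{r=1}^n$; this is obtained by letting $\lambda\to\infty$ in the earlier zero-location results for $\mathbb{H}_n$, together with Corollaries~\ref{col:ceros}--\ref{col:lambdalfa}, under the standing hypothesis on $Q_j^{(n)}(x,\alpha)$ that rules out degenerate limits. Second, and more delicate, the zeros of $\mathbb{G}_n$ must interlace with those of $H_n$ in the exact order dictated by the sign of $q^j\alpha$: namely $y_{n,l}<x_{n,l}$ when $q^j\alpha<-1$ and $x_{n,l}<y_{n,l}$ when $q^j\alpha>1$. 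I would prove this by the same sign-change argument used in the proof of the preceding interlacing theorem for $H_n$ and $\mathbb{H}_n$, applied now to $\mathbb{G}_n$: the Gaussian quadrature rule applied to the test polynomial $\mathbb{G}_n(x;q)H_n(qx)/(qx-x_i)$, combined with the formal $\lambda\to\infty$ analogue of~(\ref{eq:propDq}) to control the mass-point remainder, delivers $\mathrm{sgn}\,\mathbb{G}_n(x_i;q)=(-1)^{n-i}$ in one case and the opposite parity in the other.

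With these ingredients in hand, the case $q^j\alpha<-1$ is a direct invocation of the Interlacing Lemma with $p_n=H_n$, $q_n=\mathbb{G}_n$, and coupling $c=\lambda K_{n-1,q}^{(j,j)}(\alpha,\alpha)$, which yields at once the interlacing $y_{n,l}<\eta_{n,l}<x_{n,l}$, the monotone decrease of $\eta_{n,l}(\lambda)$, the limit $\eta_{n,l}\to y_{n,l}$ as $\lambda\to\infty$, and the asymptotic rate. For the case $q^j\alpha>1$ the order of the zeros is reversed, so the lemma cannot be applied verbatim; I would divide~(\ref{eq:Hinterlacing}) by $\lambda K_{n-1,q}^{(j,j)}(\alpha,\alpha)$ (a positive rescaling that preserves zeros) to obtain the equivalent combination $\mathbb{G}_n+\bigl(\lambda K_{n-1,q}^{(j,j)}(\alpha,\alpha)\bigr)^{-1}H_n$, and then apply the lemma with the exchanged roles $p_n=\mathbb{G}_n$, $q_n=H_n$ and coupling $c'=\bigl(\lambda K_{n-1,q}^{(j,j)}(\alpha,\alpha)\bigr)^{-1}$. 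Monotone decrease in $c'$ translates into monotone increase in $\lambda$; the regime $\lambda\to\infty$ corresponds to $c'\to 0^+$, in which the zeros of $p_n+c'q_n$ tend to those of $p_n=\mathbb{G}_n$; and the asymptotic rate in this opposite direction, not supplied directly by the lemma, is extracted from a linear Taylor expansion of $\mathbb{G}_n$ at $y_{n,l}$ in the identity $\lambda K_{n-1,q}^{(j,j)}(\alpha,\alpha)\mathbb{G}_n(\eta_{n,l};q)+H_n(\eta_{n,l};q)=0$. The main obstacle in this programme is the preliminary interlacing of the zeros of $H_n$ and $\mathbb{G}_n$; once secured, the rest is a bookkeeping application of the Interlacing Lemma together with a one-line Taylor expansion.
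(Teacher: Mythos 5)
Your proposal follows essentially the same route as the paper: the paper also obtains the theorem by applying the Interlacing Lemma to the normalized identity $\mathcal{H}_n(x;q)=H_n(x;q)+\lambda K_{n-1,q}^{(j,j)}(\alpha,\alpha)\,\mathbb{G}_n(x;q)$, using the positivity of $K_{n-1,q}^{(j,j)}(\alpha,\alpha)$ for $n\ge j+1$ and the interlacing of the zeros of $H_n$ and $\mathbb{G}_n$, and then simply states that the conclusions follow at once. In fact you are more explicit than the paper on the points it glosses over (the preliminary interlacing of $H_n$ and $\mathbb{G}_n$, the swap of the roles of $p_n$ and $q_n$ when $q^j\alpha>1$, and the Taylor-expansion argument for the rate as $\lambda\to\infty$ in that case, where one should also note that the coupling constant is $\lambda K_{n-1,q}^{(j,j)}(\alpha,\alpha)$ rather than $\lambda$ itself), so your treatment is consistent with, and a careful filling-in of, the paper's argument.
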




In order to support corollaries \ref{col:ceros} and \ref{col:lambdalfa}, we
show some numerical experiments using Wolfram Mathematica software$%
^\circledR $, dealing with the smallest and the largest zero of $\mathbb{H}%
_n(x;q)$, providing the exact values of $\lambda_\alpha$ for which the
smallest or largest zero is located beyond $\alpha$. When the smallest or
the largest zero of $\mathbb{H}_n(x;q)$ is smaller/greater than the value of 
$\alpha$, it is highlighted in bold type. It is clear that, if $\lambda = 0$%
, we recover the zeros of $H_n(x;q)$. Table \ref{table:ceros<} shows, for
some values of $\lambda$, the values of the two smallest zeros of $\mathbb{H}%
_n(x;q)$ for $n=7$, $j = 3$, $q = 2/3$ and $\alpha = -10$. For this
configuration we have $\lambda_0 = 6.32464 \cdot 10^{-15}$ and $%
\lambda_\alpha = 5.24853\cdot10^{-12}$.

Table \ref{table:ceros<} agrees with the analysis made in corollaries \ref%
{col:ceros} and \ref{col:lambdalfa}. For $\lambda =5\cdot 10^{-16}
<\lambda_0 $, the largest zero of $\mathbb{H}_7(x;q)$ for the configuration
given is inside $(-1,1)$. For $\lambda = 5\cdot 10^{-13}<\lambda_\alpha$,
this zero is outside $(-1,1)$ since $\lambda>\lambda_0$, but it has not yet
reached the value of $\alpha$. For $\lambda = \lambda_\alpha$, $\eta_{7,7}$
is exactly at $\alpha=-10$ and for $\lambda>\lambda_{\alpha}$ the value of
this zero is greater than $\alpha$.



\begin{table}[h!]
\centering
\begin{tabular}{llllll}
\hline
$\eta_{7,l}(\lambda)$ & 
\begin{tabular}{@{}l}
$\lambda = 5\cdot10^{-16}$%
\end{tabular}
& $\lambda =5\cdot 10^{-13}$ & $\lambda = \lambda_\alpha$ & $\lambda =
5\cdot10^{-10}$ & $\lambda = 5$ \\ \hline
$l=1$ & -0.99982 & -4.21644 & -10.00000 & \textbf{-11.68557} & \textbf{%
-11.70652} \\ 
$l=2$ & -0.65454 & -0.99319 & -0.99776 & -0.99778 & -0.99779 \\ \hline
\end{tabular}%
\caption{Values of the two smallest zeros of $\mathbb{H}_{7}(x;q)$ for $j=3$%
, $q=2/3$ and $\protect\alpha=-10$. When $\protect\eta_{7,1}<\protect\alpha$
the results are highlighted in bold type.}
\label{table:ceros<}
\end{table}

We have also performed a similar analysis when $q^j\alpha>1$. Table \ref%
{table:ceros>} shows the two largest zeros of $\mathbb{H}_n(x;q)$ for $n=10$%
, $j=8$, $q=4/5$ and $\alpha=15$. For this configuration we have $\lambda_0
= 2.51880\cdot 10^{-19}$ and $\lambda_\alpha=4.76010\cdot10^{-17}$.

\begin{table}[h!]
\centering
\begin{tabular}{llllll}
\hline
$\eta_{10,l}(\lambda)$ & 
\begin{tabular}{@{}l}
$\lambda =5\cdot10^{-21}$%
\end{tabular}
& $\lambda =5\cdot10^{-20}$ & $\lambda = \lambda_\alpha$ & $\lambda =
5\cdot10^{-15}$ & $\lambda = 1$ \\ \hline
$l=9$ & 0.79517 & 0.79564 & 0.99929 & 0.99932 & 0.99932 \\ 
$l=10$ & 0.99989 & 0.99991 & 15.00000 & \textbf{34.7206} & \textbf{37.1645}
\\ \hline
\end{tabular}%
\caption{Values of the first and the two largest zeros of $\mathbb{H}%
_{10}(x;q)$ for $\protect\alpha = 15$, $j=8$ and $q=4/5$. When $\protect\eta%
_{10,10}>\protect\alpha$ the results are highlighted in bold type.}
\label{table:ceros>}
\end{table}

Tables \ref{table:ceros<} and \ref{table:ceros>} perfectly support the set
of conditions (\ref{eq:ceros<a}) and (\ref{eq:ceros>a}) of corollary \ref%
{col:lambdalfa}, respectively. Also both tables show the monotonicity of $%
\eta_{n,k}$ as a function of $\lambda$ as stated in theorem \ref%
{theorem:interlacing}. Concerning the interlacing relations given in theorem %
\ref{theorem:interlacing}, table \ref{table:interlacing<} shows the values
of the zeros of $\mathbb{G}_n(x;q)$, $\mathbb{H}_n(x;q)$ and $H_n(x;q)$ for $%
n=8$, $j=5$, $\alpha=-120$, $\lambda = 10^{-20}$ and $q=2/5$.

\begin{table}[h!]
\centering
\begin{tabular}{|l|l|l|l|l|l|l|l|l|l|}
\hline
$k$ & $1$ & $2$ & $3$ & $4$ & $5$ & $6$ & $7$ & $8$ &  \\ \hline
$y_{8,k}$ & -128.11815 & -0.99999 & -0.39997 & -0.15387 & 0.00004 & 0.15389
& 0.39997 & 0.99999 &  \\ \hline
$\eta_{8,k}$ & -9.35956 & -0.99999 & -0.39997 & -0.15379 & 0.00028 & 0.15397
& 0.39997 & 0.99999 &  \\ \hline
$x_{8,k}$ & -0.99999 & -0.39999 & -0.15957 & -0.04781 & 0.04781 & 0.15957 & 
0.39999 & 0.99999 &  \\ \hline
\end{tabular}%
\caption{Zero interlacing of $\mathbb{G}_n(x;q)$, $\mathbb{H}_n(x;q)$ and $%
H_n(x;q)$ for $n=8$, $j=5$, $\protect\alpha=-120$, $\protect\lambda =
10^{-20}$ and $q=2/5$}
\label{table:interlacing<}
\end{table}

Similarly, table \ref{table:interlacing>} contains the interlacing relation
of $x_{n,k}$, $\eta_{n,k}$ and $y_{n,k}$ with $k=1,2,...n$ for $n=5$, $j=3$, 
$\alpha=15$, $\lambda=10^{-9}$ and $q=1/2$ 
\begin{table}[h!]
\centering
\begin{tabular}{|l|l|l|l|l|l|l|}
\hline
$k$ & $1$ & $2$ & $3$ & $4$ & $5$ &  \\ \hline
$x_{5,k}$ & -0.99938 & -0.46063 & 0.00000 & 0.46063 & 0.99938 &  \\ \hline
$\eta_{5,k}$ & -0.99182 & -0.33847 & 0.32873 & 0.99045 & 16.8962 &  \\ \hline
$y_{5,k}$ & -0.99178 & -0.33809 & 0.32913 & 0.99051 & 19.3393 &  \\ \hline
\end{tabular}%
\caption{Zero interlacing of $H_n(x;q)$, $\mathbb{H}_n(x;q)$ and $\mathbb{G}%
_n(x;q)$ for }
\label{table:interlacing>}
\end{table}



\section{Conclusions and further remarks}



In this paper, we have considered the $q$-Hermite I Sobolev-type polynomials
of higher order, and described important properties related to them such as
several structure relations, their hypergeometric representation derived
from the $q$-Hermite I polynomials, together with a three-term recurrence
relation of their elements. A deep analysis on the location and behaviour of
the zeros of this Sobolev sequence has also been performed.

In this last section we present some examples and further remarks on these
families of polynomials.\newline

First, we observe that the $q$-Hermite I polynomials are recovered when $%
\lambda =0$ for the $q$-Hermite I-Sobolev polynomials. Indeed, this can be
checked at many stages described in the work. It is direct to check that for
every $n\in \mathbb{N}$ the polynomial $\mathbb{H}_{n}(x;q)$ tends to $%
H_{n}(x;q)$ for $\lambda \rightarrow 0$ in view of (\ref{ConxF1}). Also, the
asymptotic behavior of the elements involved in the connection formulas when 
$\lambda $ approaches to 0 yields the result. It is straightforward to check
the asymptotic behavior of $\mathcal{E}_{k,n}$, $\mathcal{F}_{k,n}$ for $%
k=1,2$, and therefore of $\psi _{n}(x)$ for $\lambda \rightarrow 0$. In
addition to that, this asymptotic behavior can also be observed from the
hypergeometric representation of such polynomials. The first elements of $%
H_{n}(x;q)$ are

\begin{equation*}
H_0=1, \qquad H_1(x;q) = x
\end{equation*}
\begin{equation*}
H_2(x;q) = x^2+q-1, \qquad H_3(x;q) = x^3+(q^3-1)x
\end{equation*}
\begin{equation*}
H_4(x;q) = x^4+\left(q^5+q^3-q^2-1\right) x^2+q^6-q^5-q^3+q^2
\end{equation*}
\begin{equation*}
H_5(x;q) = x^5+\left(q^7+q^5-q^2-1\right)
x^3+\left(q^{10}-q^7-q^5+q^2\right) x
\end{equation*}

Let us now fix the parameters defining one of the families of Sobolev-type
orthogonal polynomials with $\alpha=3$, $q=3/5$. We also choose $j=2$, which
entails that the $q$-derivatives of second order evaluated at $\alpha$ are
involved in the definition of the sequence of polynomials. As we have stated
above, for $\lambda=0$ we recover the $q-$Hermite I polynomials. For $%
\lambda>0$ and using the novel three term recursion formula with non
constant coefficients in (\ref{eq:3TRR}), we show the first elements of the
sequence $\mathbb{H}_{n}(x;q)$ for the previous configuration and initial
conditions given by $\mathbb{H}_0(x;q)=1$ and $\mathbb{H}_{-1}(x;q)=0$ 
\begin{equation*}
\mathbb{H}_0(x;q) = 1
\end{equation*}
\begin{equation*}
\mathbb{H}_{1}(x;q) = x, \qquad \mathbb{H}_2(x;q) = x^2-\frac{2}{5}
\end{equation*}
\begin{equation*}
\mathbb{H}_3(x;q) \approx H_3(x;q)-\frac{65.5362 \lambda \left(x^2-0.4\right)%
}{1+11.1456 \lambda }, \qquad \mathbb{H}_{4}\approx H_4(x;q)-\frac{5323
\lambda \left(x^3+0.048 x^2-0.784 x-0.0192\right)}{1+1376.48 \lambda }
\end{equation*}
\begin{equation*}
\mathbb{H}_5(x;q) \approx H_5(x;q)-\frac{372144 \lambda \left( x^4+0.0482233
x^3-1.06393 x^2-0.037807 x+0.11197\right)}{1+111759 \lambda }
\end{equation*}



Figure \ref{fig:qHS_plots} illustrates this sequence of orthogonal
polynomials in the particular case of $\lambda=1$.



\begin{figure}[h]
\centering
\includegraphics[width=0.8\textwidth]{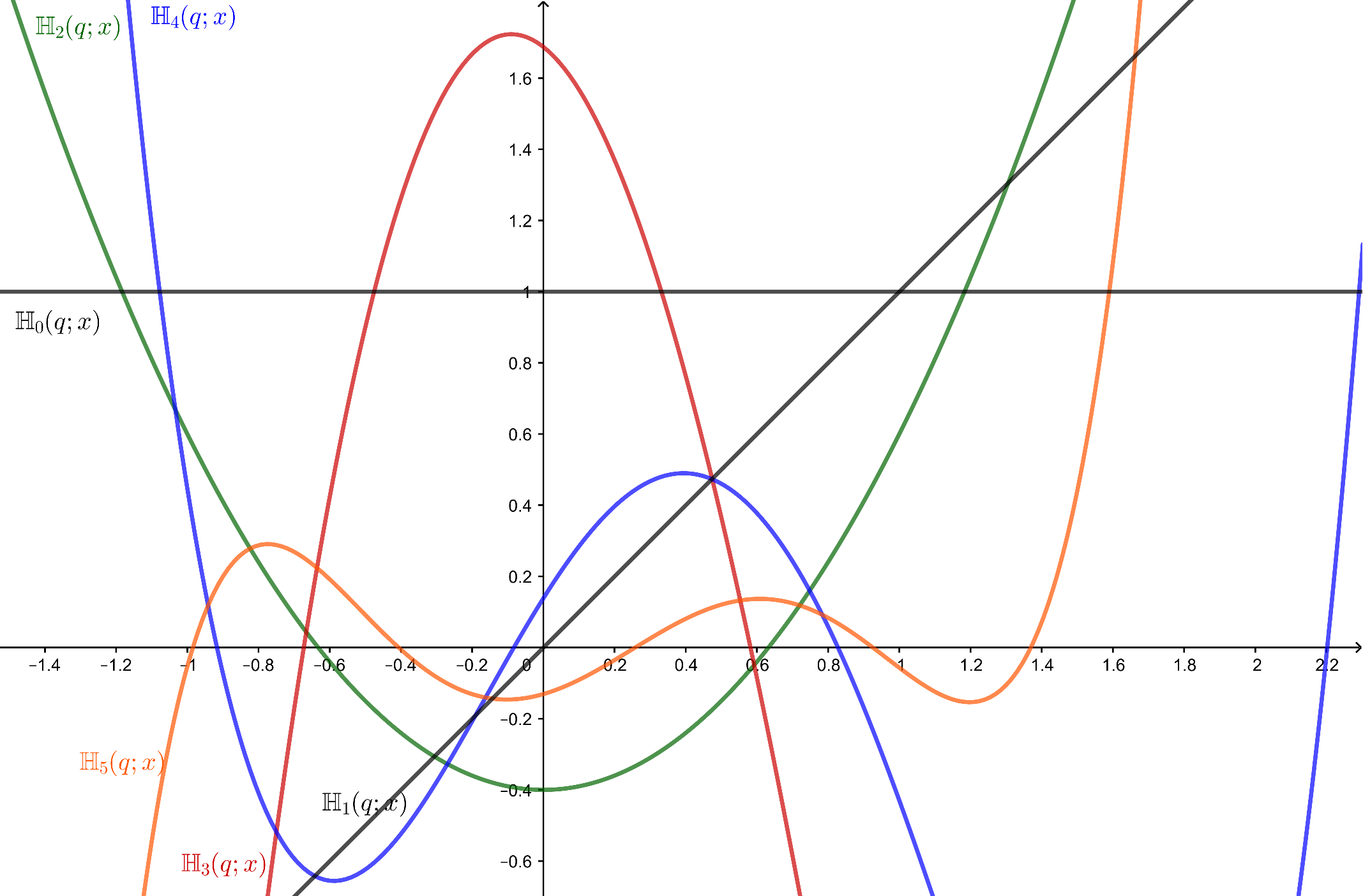}
\caption{The polynomials $\mathbb{H}_k(x;q)$ for $k=0,\ldots,5$ under the
previous settings}
\label{fig:qHS_plots}
\end{figure}



Observe that the parity of the elements of the family is not maintained in
the Sobolev settings.

In addition to this, one can check that $\mathbb{H}_k(x;q)$ coincides with $%
H_k(x;q)$ for $k=0,1,2$. This is a general property which is worth remarking
and which comes from the very definition of the polynomials. Indeed, observe
from (\ref{piSob}) that the Sobolev part of the inner product does not
appear when considering functions $f$ or $g$ being polynomials of degree at
most $j-1$ because of the following general property:



\begin{lemma}
\label{e11} Let $p(x)\in\mathbb{R}[x]$ of degree at most $j-1$. Then ${%
\mathscr D}^{j}_{q}p\equiv 0$.
\end{lemma}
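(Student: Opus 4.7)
The plan is to reduce the statement to monomials and then apply the $q$-derivative $n+1$ times to $x^n$, which will vanish for degree reasons.

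First, I would invoke the linearity of the Euler--Jackson operator $\mathscr{D}_q$ (which is immediate from its definition in (\ref{eulerjackson})) and hence of each iterate $\mathscr{D}_q^j$. Writing $p(x)=\sum_{n=0}^{j-1}c_n x^n$, it therefore suffices to show that $\mathscr{D}_q^j x^n=0$ whenever $0\le n\le j-1$.

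Next, a direct computation from (\ref{eulerjackson}) gives
\begin{equation*}
\mathscr{D}_q x^n=\frac{(qx)^n-x^n}{(q-1)x}=\frac{q^n-1}{q-1}\,x^{n-1}=[n]_q\,x^{n-1},\qquad n\ge 1,
\end{equation*}
while $\mathscr{D}_q 1=0$. An easy induction on $k$ then yields $\mathscr{D}_q^k x^n=[n]_q[n-1]_q\cdots[n-k+1]_q\,x^{n-k}$ for $0\le k\le n$, and in particular $\mathscr{D}_q^n x^n=[n]_q!$ is a constant. Applying $\mathscr{D}_q$ once more annihilates it, so $\mathscr{D}_q^{n+1}x^n=0$, and every further iterate of $\mathscr{D}_q$ preserves this zero.

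Since $n\le j-1$ we have $j\ge n+1$, so $\mathscr{D}_q^j x^n=\mathscr{D}_q^{j-n-1}(\mathscr{D}_q^{n+1}x^n)=0$. Combining with linearity finishes the proof. There is no real obstacle here; the only thing to watch is the edge case $n=0$, where already $\mathscr{D}_q^1 1=0$ and hence $\mathscr{D}_q^j 1=0$ for every $j\ge 1$.
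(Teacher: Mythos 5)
Your proof is correct and follows essentially the same route as the paper: reduce by linearity to the monomials $x^k$ with $k\le j-1$, note that each application of $\mathscr{D}_q$ lowers the degree by one (via $\mathscr{D}_q x^k=[k]_q x^{k-1}$), and conclude that $j$ applications annihilate the polynomial. Your version merely spells out the induction on iterates more explicitly than the paper does.
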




\begin{proof}
It is sufficient to prove the result for the monomials $p_k(x)=x^k$ for $%
k=0,1,\ldots,j-1$. We observe that ${\mathscr D}^{j}_{q}1\equiv 0$ and 
\begin{equation*}
{\mathscr D}^{j}_{q}x^k=\frac{1-q^{k}}{1-q}x^{k-1},\qquad k\ge 1.
\end{equation*}
This allows to conclude the result.
\end{proof}



Applying Lemma~\ref{e11}, we observe from the definition of the $n$-th
reproducing kernel associated to the $q$-Hermite I polynomials that $%
K_{j-1,q}^{(0,j)}(x,\alpha)\equiv0$. Therefore, in view of (\ref{ConxF1}) we
also obtain that $\mathbb{H}_j(x;q)=H_j(x;q)$.

As a consequence, we have the following result.



\begin{corollary}
$\mathbb{H}_k(x;q)\equiv H_k(x;q)$ for every $0\le k\le j$.
\end{corollary}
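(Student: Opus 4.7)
The plan is to read the result off the connection formula (\ref{ConxF1}) after splitting into two cases: $0\le n\le j-1$ and $n=j$. For each case I need to identify why the correction term subtracted from $H_n(x;q)$ vanishes identically in $x$.

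For the first case, $0\le n\le j-1$, I would invoke the convention $H_{n-j}(\alpha;q)=0$ whenever $n<j$, which is recorded both in the forward-shift relation (\ref{FSop}) and in the statement just after (\ref{ConxF1}). Substituting this directly into (\ref{ConxF1}) makes the numerator of the correction zero, and hence $\mathbb{H}_n(x;q)=H_n(x;q)$. No further work is required in this case.

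For the second case, $n=j$, the factor $H_{n-j}(\alpha;q)=H_0(\alpha;q)=1$ no longer vanishes, and also $[j]_q^{(j)}\neq 0$, so the vanishing must come from the kernel $K_{j-1,q}^{(0,j)}(x,\alpha)$. Using the spectral expression (\ref{eq:KernelDij}),
\begin{equation*}
K_{j-1,q}^{(0,j)}(x,\alpha)=\sum_{k=0}^{j-1}\frac{H_k(x;q)\,\mathscr{D}_q^{j}H_k(\alpha;q)}{\|H_k\|^{2}},
\end{equation*}
each polynomial $H_k(y;q)$ with $0\le k\le j-1$ has degree at most $j-1$, so Lemma \ref{e11} yields $\mathscr{D}_q^{j}H_k\equiv 0$ for every such $k$. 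Hence every summand vanishes, the whole kernel $K_{j-1,q}^{(0,j)}(x,\alpha)$ is identically zero, and (\ref{ConxF1}) collapses to $\mathbb{H}_j(x;q)=H_j(x;q)$.

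There is no real obstacle here; the result is essentially a bookkeeping corollary of (\ref{ConxF1}) combined with Lemma \ref{e11}. The only point to be careful about is not to treat the two cases together, since they succeed for different reasons: for $n<j$ the vanishing happens at the level of the scalar coefficient $H_{n-j}(\alpha;q)$, while for $n=j$ it happens at the level of the kernel itself through the action of $\mathscr{D}_q^{j}$ on polynomials of degree $\le j-1$.
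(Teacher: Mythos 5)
Your proof is correct and follows essentially the same route as the paper: the decisive case $k=j$ is handled exactly as in the text, via Lemma~\ref{e11} forcing $K_{j-1,q}^{(0,j)}(x,\alpha)\equiv 0$ so that (\ref{ConxF1}) collapses to $\mathbb{H}_j(x;q)=H_j(x;q)$. For $k<j$ you read the vanishing off the convention $H_{n-j}(\alpha;q)=0$ in (\ref{ConxF1}) (equivalently, the kernel itself already vanishes by the same lemma), while the paper phrases it through the inner product reducing to the standard one on polynomials of degree at most $j-1$; this is only a cosmetic difference.
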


Finally, the condition $q^j\alpha\notin(-1,1)$ is crucial for conducting the
analysis of the zeros of $\mathbb{H}_n(x;q)$ in this type of $q$-lattices,
ensuring that they are all real. Note that this condition must be satisfied
throughout the paper. For instance in (\ref{Kernel0j}), where, due to the
presence of operator $(x\boxminus_q\alpha)^{j+1}$ in the denominator of $%
\mathcal{A}(x,\alpha)$ and $\mathcal{B}(x,\alpha)$, the condition $q^j\alpha
\notin(-1,1)$ must be ensured. 





\section*{Acknowledgements}


The work of E. J. Huertas and A. Lastra has been supported by Dirección
General de Investigación e Innovación, Consejería de Educación e Investigació%
n of the Comunidad de Madrid (Spain) and Universidad de Alcalá, under grant
CM/JIN/2021-014, \textit{Proyectos de I+D para Jóvenes Investigadores de la
Universidad de Alcalá 2021}, and the Ministerio de Ciencia e Innovació%
n-Agencia Estatal de Investigación MCIN/AEI/10.13039/501100011033 and the
European Union ``NextGenerationEU''/PRTR, under grant TED2021-129813A-I00.%
\newline

This research was conducted while E. J. Huertas was visiting the ICMAT
(Instituto de Ciencias Matem\'aticas), from jan-2023 to jan-2024 under the
Program \textit{Ayudas de Recualificaci\'on del Sistema Universitario Españ%
ol para 2021-2023 (Convocatoria 2022) - R.D. 289/2021 de 20 de abril (BOE de
4 de junio de 2021)}. This author wish to thank the ICMAT, Universidad de
Alcalá, and the Plan de Recuperación, Transformación y Resiliencia
(NextGenerationEU) of the Spanish Government for their support.\newline

The work of A. Lastra is also partially supported by the project
PID2019-105621GB-I00 of Ministerio de Ciencia e Innovaci\'on, Spain.\newline

The work of V. Soto-Larrosa has been supported by Consejer\'ia de
Econom\'ia, Hacienda y Empleo of the Comunidad de Madrid through ``Programa
Investigo'', funded by the European Union ``NextGenerationEU''.\newline

These authors are members of the research group AnFAO (Cod.: CT-CE2023/876)
of Universidad de Alcal\'a.





\end{document}